\newtheorem{theorem}{Theorem}[section]
\newtheorem{lemma}[theorem]{Lemma}
\newtheorem{corollary}[theorem] {Corollary}
\begin{document} 
 
\title{Indecomposable $PD_3$-complexes}
\author{J.A.Hillman}
\address{School of Mathematics and Statistics\\
     University of Sydney, NSW 2006\\
      Australia }

\email{jonathan.hillman@sydney.edu.au}
 
\thanks{This work was begun at Sydney in 2008 and continued
at the MPIM Bonn and at the Universit\'e Paul Sabatier in Toulouse.
The paper was largely written while the author 
was visiting the University of Durham
as the Grey College Mathematics Fellow for Michaelmas Term of 2008.
The main result was completed in Sydney, in 2011.
}

 
\begin{abstract} 
We show that if $X$ is an indecomposable $PD_3$-complex and $\pi_1(X)$
is the fundamental group of a reduced finite graph of finite groups 
but is neither $Z$ nor $Z\oplus{Z/2Z}$ then $X$ is orientable,
the underlying graph is a tree,
all the edge groups are $Z/2Z$ and all but at most one of the vertex groups 
is dihedral of order $2m$ with $m$ odd.
The remaining vertex group has cohomological period dividing 4.
Every such group is realized by some $PD_3$-complex.

We also ask whether every $PD_3$-complex has a finite covering space
which is homotopy equivalent to a closed orientable 3-manifold,
and we propose a strategy for tackling this question.
\end{abstract}

\keywords{degree-1 map, Dehn surgery, graph of groups,
3-manifold, $PD_3$-complex, $PD_3$-group, 
periodic cohomology, virtually free}

\subjclass{57M05}
 
\maketitle 

\section{Introduction}

It is a well known consequence of the Sphere Theorem that every closed 3-manifold 
is a connected sum of indecomposable factors, 
which are either aspherical or have fundamental group $Z$ or a finite group.
There is a partial analogue for $PD_3$-complexes: 
Turaev showed that a $PD_3$-complex $X$ whose fundamental group 
is a free product is a connected sum
while Crisp showed that every indecomposable $PD_3$-complex is either aspherical or
its fundamental group is the fundamental group of a finite graph of finite groups.
However the group may have infinitely many ends, 
in contrast to the situation for 3-manifolds. 
Two orientable examples with group $S_3*_{Z/2Z}S_3$ were given in 
\cite{hi04,hi05}.

We shall show that, excepting only the cases $S^1\times{RP^2}$ 
and $S^1\widetilde\times{S^2}$,
every indecomposable $PD_3$-complex with virtually free fundamental group 
is orientable,
the underlying graph is a tree,
all the edge groups are $Z/2Z$ and all but at most one of the vertex groups 
is dihedral of order $2m$ with $m$ odd.
The remaining vertex group has cohomological period dividing 4.
(We may also assume that the tree is linear.)
Every group with such a graph of groups structure is realized 
by some $PD_3$-complex.
(It remains unclear whether the existence of indecomposable examples
with infinitely-ended group is merely an accident of nature
or has some deeper explanation.)

Our argument relies on Turaev's criterion for a group to be the fundamental group
of a $PD_3$-complex, and on one of Crisp's results,
in which he showed that if the centralizer of an element of $\pi=\pi_1(X)$ 
of prime order $p>1$ is infinite then $p=2$ and the element is orientation-reversing.
In conjunction with Turaev's Splitting Theorem it follows quickly that 
(in the orientable case) the Sylow subgroups of the vertex groups
in a graph of groups structure for the fundamental group 
are cyclic or quaternionic.
Hence the vertex groups all have periodic cohomology.
In our main result, Theorem 5.2, 
we use the known classification of such groups 
with Crisp's result to restrict the possible vertex and edge groups.
The constructive aspect is an extension of the idea in \cite{hi04},
in which we showed that the augmentation ideal for $S_3*_{Z/2Z}S_3$
had a self-conjugate, diagonal presentation matrix.
Crisp's result is used again to show that there are 
no exotic nonorientable examples.

In the final part of this paper we turn to the aspherical case.
Here the main question is whether every aspherical $PD_3$-complex 
is homotopy equivalent to a closed 3-manifold.
An equivalent question is whether every $PD_3$-complex 
has a finite covering space
which is homotopy equivalent to a closed orientable 3-manifold.
We suggest a reduction of this question to a question about 
Dehn surgery on links.

\section{group theoretic preliminaries}

If $G$ is a group $|G|$, $G'$ and $\zeta{G}$ shall denote the order, 
commutator subgroup and centre of $G$, while if $H\leq{G}$ is a subgroup $C_G(H)$
and $N_G(H)$ shall denote the centralizer and normalizer, respectively.
Let $I_G$ denote the augmentation ideal of $\mathbb{Z}[G]$.
A homomorphism $w:G\to\{\pm1\}$ defines an anti-involution of $\mathbb{Z}[G]$
by $\bar{g}=w(g)g^{-1}$, for all $g\in{G}$.

If $R$ is a ring two finitely presentable left $R$-modules 
$M$ and $N$ are {\it stably isomorphic\/} if $M_1\oplus{P}\cong{N}\oplus{Q}$ 
for some finitely generated projective $R$-modules $P$ and $Q$.
Let $[M]$ denote the stable isomorphism class of $M$.
If $I_G$ has a finite presentation matrix $A$ over $\mathbb{Z}[G]$
let $J_G$ be the left $\mathbb{Z}[G]$-module 
with presentation matrix the conjugate transpose $\overline{A}^{tr}$.
Tietze move considerations show that $J(G)$ is well-defined up to 
stabilization by direct sums with finitely generated {\it free\/} modules.
In particular, $[J_G]$ is well-defined \cite{tur}.

If all the Sylow subgroups of a finite group $M$ are cyclic then $M$ is metacyclic, 
with a presentation
\[\langle a,b\mid ~a^n=b^m=1,~aba^{-1}=b^r\rangle,\]
where $r^n\equiv1$ {\it mod} $m$ and $(m,n(r-1))=1$, so $m$ is odd.
(See Proposition 10.1.10 of \cite{rob}.)
Let $u=\min\{ k\mid r^k\equiv1~mod~m\}$.
Then $M'$ and $\zeta{M}$ are generated by the images of $b$ and $a^u$, respectively.
When $n=2$ and $r=-1$ we have the dihedral group $D_{2m}$.
If we set $m=2s+1$ then $D_{2m}$ has the presentation
\[\langle a,b\mid~ a^2=1,~ab^sa=b^{s+1}\rangle.\]

There are six families of finite groups with periodic cohomology:
\begin{enumerate}
\item $Z/mZ\rtimes{Z/nZ}$;

\item $Z/mZ\rtimes(Z/nZ\times{Q(2^i)})$, $i\geq3$;

\item $Z/mZ\rtimes(Z/nZ\times{T^*_k})$, $k\geq1$;

\item $Z/mZ\rtimes(Z/nZ\times{O^*_k})$, $k\geq1$;

\item $(Z/mZ\rtimes{Z/nZ})\times{SL(2,p)}$, $p\geq5$ prime;

\item $Z/mZ\rtimes(Z/nZ\times{TL(2,p)})$, $p\geq5$ prime.

\end{enumerate}
Here $m$, $n$ and the order of the quotient by the metacyclic subgroup
$Z/mZ\rtimes{Z/nZ}$ are relatively prime.
(See \cite{dm85}.)
The groups $TL(2,p)$ of the final family may be defined as follows.
Choose a nonsquare $\omega\in\mathbb{F}_p^\times$, and let $TL(2,p)\subset{GL(2,p)}$
be the subset of matrices with determinant 1 or $\omega$.
The multiplication $\star$ is given by $A\star{B}=AB$ if $A$ or $B$ has determinant 1, 
and $A\star{B}=\omega^{-1}AB$ otherwise.
Then $SL(2,p)=TL(2,p)'$ and has index 2.
(Note also that $SL(2,3)\cong{T^*_1}$ and $TL(2,3)\cong{O^*_1}$.)

In particular, a finite group has cohomological period 2 if and only if it is cyclic, 
and has cohomological period 4 if and only if it is a product $B\times{Z/dZ}$,
where $B$ is a generalized quaternionic group $Q(8a,b,c)$, 
an extended binary polyhedral group $T^*_k$, $O^*_k$ or $I^*=SL(2,5)$
or a metacyclic group (with $n=2^e$ and $r=-1$), and $(d,|B|)=1$ \cite{dm85}.

\begin{lemma}
Let $G$ be a finite group with periodic cohomology.
If $G$ is not cyclic or metacyclic then it has an unique central involution
which is a square, and $4$ divides $|G|$.
\end{lemma}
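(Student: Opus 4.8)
The plan is to reduce the whole statement to the structure of the Sylow $2$-subgroup of $G$. A finite group has periodic cohomology precisely when every Sylow subgroup is cyclic or generalized quaternionic (equivalently, the $p$-rank is $1$ for all $p$); since generalized quaternionic groups occur only at the prime $2$, the Sylow subgroups at odd primes are automatically cyclic. So if the Sylow $2$-subgroup $P$ were cyclic as well, all Sylow subgroups of $G$ would be cyclic and $G$ would be metacyclic, against our hypothesis. Hence $P\cong Q(2^i)$ for some $i\geq3$, and in particular $8$, and so $4$, divides $|G|$.

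I would then record the relevant facts about $P=Q(2^i)=\langle x,y\mid x^{2^{i-1}}=1,\ y^2=x^{2^{i-2}},\ yxy^{-1}=x^{-1}\rangle$: it contains a single element of order $2$, namely $z=x^{2^{i-2}}=y^2$, which is central in $P$ and is manifestly a square. A Sylow argument shows that every involution of $G$ is conjugate to $z$, since any involution lies in a Sylow $2$-subgroup $P'=gPg^{-1}$, which again has $gzg^{-1}$ as its only involution. Thus the involutions of $G$ form a single conjugacy class, that of $z$. It therefore suffices to prove that $z$ is central in $G$: once this is known, the conjugacy class of $z$ is $\{z\}$, so $z$ is the unique involution, it is central, and it is a square, as required.

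This centrality is the only real point, and conceptually it is the Brauer--Suzuki theorem (a group with generalized quaternionic Sylow $2$-subgroups has its involution central modulo the maximal normal subgroup of odd order). For a self-contained argument I would instead read it off the classification (2)--(6). In family (5) the relevant factor $SL(2,p)$ is a direct factor and $z=-I$ is central in it, so centrality in $G$ is immediate. In families (2), (3), (4) and (6) one has $G=Z/mZ\rtimes H$ with $Z/mZ$ of odd order and $z$ the unique involution of a factor $B\in\{Q(2^i),T_k^*,O_k^*\}$ of $H$, or $z=-I\in SL(2,p)\leq TL(2,p)$. The key observation is that $z$ lies in $B'$: for $Q(2^i)$ we have $z=(x^2)^{2^{i-3}}\in\langle x^2\rangle=B'$, while $T_k^*,O_k^*$ contain $z=-I$ in their derived subgroup and $SL(2,p)$ is perfect. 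Since the action of $H$ on the abelian odd-order group $Z/mZ$ factors through the abelian group $\mathrm{Aut}(Z/mZ)$, every commutator acts trivially; hence $z$ centralizes $Z/mZ$, and as $z$ is already central in $H$ we get $z\in\zeta G$. The main obstacle is precisely verifying this centrality uniformly; once one has checked that $z$ acts trivially on the odd part in each family, the rest is formal.
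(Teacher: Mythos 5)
Your proof is correct and takes essentially the same route as the paper: both ultimately read the existence of a central involution off the classification of groups with periodic cohomology, the paper via the observation that an involution in such a group is central if and only if it is unique, you via the generalized quaternionic Sylow $2$-subgroup (which also gives $4\mid|G|$ and the single conjugacy class of involutions) together with the remark that $z$ lies in the derived subgroup and hence acts trivially on the odd-order normal cyclic factor. The only real difference is that you carry out in full the case-by-case verification that the paper compresses into ``examining the list''.
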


\begin{proof}
This follows on examining the above list of finite groups with periodic cohomology.
Since all subgroups of order $p^2$ in a finite group $G$ with periodic cohomology are cyclic, 
an involution $g\in{G}$ is central if and only if it is the unique involution.
\end{proof}

In particular, if $G$ has cohomological period 4 and does not have a central involution then
$G\cong{D_{2m}}\times{Z/dZ}$, for some odd $m\geq3$ and $d\geq1$.

\begin{lemma}
Let $G$ be a finite group with periodic cohomology of period greater than $4$.
Then $G$ has a subgroup $H\cong{Z/pZ}\rtimes{Z/qZ}$, where $p$ is an odd prime,
$q$ is an odd prime or $4$, $q$ divides $p-1$ and $\zeta{H}=1$.
\end{lemma}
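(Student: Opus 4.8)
The plan is to produce $H$ from the local structure of $G$ at a suitable odd prime. First I would record the elementary fact that a semidirect product $Z/pZ\rtimes{Z/qZ}$ with $p$ an odd prime has $\zeta{H}=1$ exactly when $Z/qZ$ acts faithfully on $Z/pZ$, and that this forces $q\mid p-1$. Hence it suffices to exhibit, inside $G$, an odd prime $p$ and an element of order exactly $q$ (with $q$ an odd prime or $4$) normalising a subgroup $P\cong Z/pZ$ and inducing a faithful automorphism of it: since $q\mid p-1$ is coprime to $p$, the group generated by $P$ and such an element is automatically $P\rtimes{Z/qZ}$ with trivial centre. The hypothesis excludes the cyclic groups (period $2$) and, by the classification recalled above, the groups of period $4$.

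The natural way to find $p$ is through the periods of Sylow subgroups. For a group with periodic cohomology the period is the least common multiple of the $\ell$-primary periods, and for a cyclic Sylow $\ell$-subgroup $G_\ell$ this local period is $2\,|N_G(G_\ell)/C_G(G_\ell)|$, while a quaternionic $2$-Sylow contributes only $4$. So period greater than $4$ yields a prime $p$ with cyclic $G_p$ and $|N_G(G_p)/C_G(G_p)|\geq3$. I would then observe that no element of $p$-power order can induce a nontrivial automorphism of $G_p$, since it would enlarge the Sylow subgroup; thus $N_G(G_p)/C_G(G_p)$ has order prime to $p$ and embeds in $\mathrm{Aut}(Z/pZ)$, so it is cyclic of order dividing $p-1$, and this forces $p$ to be odd. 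Taking $q$ to be an odd prime dividing this order, or $q=4$ when the order is a power of $2$ (necessarily at least $4$), gives an order-$q$ subgroup of $N_G(G_p)/C_G(G_p)$ acting faithfully on the subgroup $P\cong Z/pZ$ of $G_p$, with $q\mid p-1$.

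The hard part, and the step I expect to absorb most of the work, is the descent from this quotient to an honest subgroup: $N_G(G_p)/C_G(G_p)$ is only a section of $G$, and I must realise its order-$q$ subgroup by a genuine $Z/qZ\leq{G}$ inducing a faithful automorphism of $P$. The obstruction is that the order-$q$ part of a cyclic group lying over this section can fall inside the kernel of the action on $P$. This is visible already in a Borel subgroup of $SL(2,p)$, where the torus acts through squaring and so halves the $2$-part of the acting group, and in the metacyclic families when $q$ divides the complementary cyclic factor to a higher power than it divides the order of the induced automorphism. Controlling this is exactly where I would use the classification family by family, together with the coprimality built into the list and the latitude to take $q=4$ in place of a larger power of $2$; I expect the $2$-primary bookkeeping to be the most delicate point.
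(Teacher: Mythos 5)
Your structural reduction is sound as far as it goes, and it is considerably more informative than the paper's proof, which consists of the single sentence that the lemma ``follows on examining the above list.'' Swan's computation of the $p$-period (twice the order of $N_G(G_p)/C_G(G_p)$ for $p$ odd with cyclic Sylow subgroup, and at most $4$ at the prime $2$), together with the Burnside argument that $N_G(G_p)/C_G(G_p)$ has order prime to $p$ and embeds in $\mathrm{Aut}(G_p)$, hence is cyclic of order dividing $p-1$, does produce an odd prime $p$, a subgroup $P\cong{Z/pZ}$, and a cyclic subgroup of order $q$ (an odd prime or $4$) of $N_G(G_p)/C_G(G_p)$ acting faithfully on $P$. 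The gap is exactly where you locate it: the descent from this section to an honest subgroup $Z/pZ\rtimes{Z/qZ}$ of $G$ is never carried out, only deferred to a family-by-family check. That deferral is not a harmless bookkeeping step, because the check fails in family (1).

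Carry out your own metacyclic scenario explicitly. Let $G=\langle a,b\mid a^9=b^7=1,\ aba^{-1}=b^2\rangle$, which satisfies the conditions of family (1) with $m=7$, $n=9$, $r=2$. Since $2$ has order $3$ modulo $7$, the $7$-period of $G$ is $6$, so $G$ has period $6>4$. But $a^3$ centralizes $b$, and every element of order $3$ in $G$ is conjugate into $\langle a^3\rangle$, so every element of order $3$ acts trivially on the normal subgroup $\langle b\rangle$; hence every subgroup of order $21$ is cyclic, and $G$ has no subgroup of the required form at all (the only admissible pair would be $p=7$, $q=3$). This is precisely the obstruction you describe --- $q=3$ divides the complementary factor $n=9$ to a higher power than it divides the order $u=3$ of the induced automorphism --- and the coprimality conditions built into the classification do not exclude it, while the latitude to replace a $2$-power by $4$ is irrelevant here since $q$ is odd. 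So the lifting step cannot be completed: the lemma is false as stated, and the paper's one-line appeal to the classification conceals the same defect. What your reduction does prove, and what the application in Theorem 4.6 actually needs (its argument only uses that the generator acts on $Z/pZ$ by some $r$ with $r\not\equiv{r^{-1}}$ mod $p$), is the weaker statement that $G$ contains a subgroup $P\rtimes{Q}$ with $P\cong{Z/pZ}$ and $Q$ cyclic of prime-power order acting on $P$ through a quotient of order greater than $2$; you should prove that statement instead, or else record the counterexample.
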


\begin{proof}
This follows on examining the above list of finite groups with periodic cohomology.
\end{proof}

Such groups $H$ have presentations
$\langle a,b\mid ~a^q=b^p=1,~aba^{-1}=b^r\rangle,$
where $r$ is a primitive $q$th root {\it mod\/} $p$.

A {\it graph of groups} $(\mathcal{G},\Gamma)$ consists of a graph $\Gamma$ 
with origin and target functions $o$ and $t$ from the set of edges $E(\Gamma)$ 
to the set of vertices $V(\Gamma)$, and a family $\mathcal{G}$ of groups $G_v$ 
for each vertex $v$ and subgroups $G_e\leq{G_{o(e)}}$ for each edge $e$,
with monomorphisms $\phi_e:G_e\to{G_{t(e)}}$.
(We shall usually suppress the maps $\phi_e$ from our notation.)
In considering paths or circuits in $\Gamma$ we shall not require that 
the edges be compatibly oriented.

The {\it fundamental group} of $(\mathcal{G},\Gamma)$ is the group 
$\pi\mathcal{G}$ 
with presentation
\[\langle G_v,t_e\mid~t_egt_e^{-1}=\phi_e(g)~\forall{g}\in{G_e},~t_e=1~\forall{e}\in{E(T)}\rangle,\]
where $T$ is some maximal tree for $\Gamma$.
Different choices of maximal tree give isomorphic groups.
We may (and shall) always assume that the graph of groups is {\it reduced},
i.e., that if $o(e)\not=t(e)$ then $G_e$ is properly contained in each 
of $G_{o(e)}$ and $G_{t(e)}$.
(See \cite{dd}.)
If there is an edge with $G_e=G_{o(e)}$ and $\phi_e:G_e\cong{G_{t(e)}}$ 
we shall say that the graph of groups has a {\it loop isomorphism}.

\begin{lemma}
Let $\pi=\pi\mathcal{G}$, where $(\mathcal{G},\Gamma)$ 
is a nontrivial reduced finite graph of groups.
If there is an edge $e$ with $G_e=1$ then either $\pi$ 
is a nontrivial free product or $\pi\cong{Z}$.
\end{lemma}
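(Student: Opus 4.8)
The plan is to apply the standard structure theorem for the fundamental group of a graph of groups, which exhibits $\pi$ as a splitting over any single chosen edge group. For the given edge $e$ there are two possibilities. If $e$ separates $\Gamma$, so that deleting the (open) edge gives $\Gamma_1\sqcup\Gamma_2$ with $o(e)$ a vertex of $\Gamma_1$ and $t(e)$ a vertex of $\Gamma_2$, then $\pi\cong{A}*_{G_e}B$, where $A$ and $B$ are the fundamental groups of the induced graphs of groups on $\Gamma_1$ and $\Gamma_2$. If $e$ does not separate $\Gamma$ (in particular whenever $e$ is a loop), then instead $\pi$ has an HNN presentation $\pi\cong\pi'*_{G_e}$, where $\pi'$ is the fundamental group of the graph of groups obtained by deleting $e$.

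First I would feed in the hypothesis $G_e=1$. An amalgamation over the trivial group collapses to an ordinary free product $\pi\cong{A}*B$, while the HNN extension with trivial associated subgroup collapses to $\pi\cong\pi'*\langle{t_e}\rangle\cong\pi'*Z$, since the defining relation $t_egt_e^{-1}=\phi_e(g)$ is vacuous once $G_e=1$. Thus in every case $\pi$ is manifestly a free product, and the only remaining task is to decide when one of the free factors can be trivial.

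Here I would invoke reducedness together with the Bass--Serre fact that every vertex group embeds in the fundamental group of a graph of groups. If $e$ is not a loop, the reducedness condition forces $1\subsetneq{G_{o(e)}}$ and $1\subsetneq{G_{t(e)}}$, and these vertex groups inject into the relevant factors: in the separating case both $A$ and $B$ contain a nontrivial vertex group, while in the non-separating case $\pi'$ already contains the nontrivial $G_{o(e)}$. In each such case $\pi$ is a nontrivial free product. If on the other hand $e$ is a loop, then it does not separate and $\pi\cong\pi'*Z$; either $\pi'\neq1$, giving a nontrivial free product, or $\pi'=1$, giving $\pi\cong{Z}$. These alternatives exhaust all configurations and yield exactly the claimed dichotomy.

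The step I expect to be most delicate is the bookkeeping around the degenerate configurations: checking that the loop case is the sole source of the exceptional conclusion $\pi\cong{Z}$, and confirming that the reducedness hypothesis, which as stated constrains only the non-loop edges, is genuinely sufficient to exclude a trivial factor in all the remaining cases. The underlying splitting theorem is entirely standard, so the real content lies in organizing these case distinctions cleanly rather than in any computation.
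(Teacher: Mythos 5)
Your proposal is correct and follows essentially the same route as the paper: split $\pi$ over the edge $e$, treat the separating and non-separating cases, and use reducedness (which forces $G_{o(e)}$ and $G_{t(e)}$ to be nontrivial when $e$ is not a loop) to see that the free factors are nontrivial, with the loop/non-separating case contributing the stable letter $t_e$ as a $Z$ factor and hence the exceptional conclusion $\pi\cong{Z}$ when the complementary factor is trivial. The paper's version is just a terser statement of the same two-case argument.
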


\begin{proof}
If $\Gamma-\{e\}$ has two components then $\pi$ is a nontrivial free product.
Otherwise a maximal tree for $\Gamma-\{e\}$ is also a maximal tree for
$\Gamma$, and the stable letter $t_e$ generates a free factor of $\pi$.
\end{proof}

The following converse is due to Daniel Groves.

\begin{lemma}
Let $\pi=\pi\mathcal{G}$, where $(\mathcal{G},\Gamma)$ is a finite graph of
finite groups and $\Gamma$ is a tree.
If all the edge groups are nontrivial then $\pi$ is indecomposable.
\end{lemma}

\begin{proof}
If $\pi\cong{A*B}$ then $\pi$ acts without global fixed points on the 
Bass-Serre tree $\Upsilon$ associated to the splitting.
Each finite subgroup of $\pi$ fixes a point in this tree.
If $x_o$ and $x_t\in\Upsilon$ are fixed by 
adjacent vertex groups $G_{o(e)}$ and $G_{t(e)}$ 
then $G_e$ fixes the interval $[x_o,x_t]$ joining these points.
Hence $x_o=x_t$, since edge-stabilizers in $\Upsilon$ are trivial.
Induction on the size of $\Gamma$ now shows that 
$x_o$ is fixed by $\pi$, contradicting the fitst sentence of the proof.
\end{proof}

This argument extends easily to all finite graphs of finite groups with non-trivial edge groups \cite{gr11}, but we need only the above case.
 
\begin{lemma}
Let $\pi=\pi\mathcal{G}$, where $(\mathcal{G},\Gamma)$ is a finite graph of groups.
If $C$ is a subgroup of an edge group $G_e$ with $N_{G_e}(C)$ properly contained in each of 
$N_{G_{o(e)}}$ and $N_{G_{t(e)}}$ then $N_\pi(C)$ is infinite.
\end{lemma}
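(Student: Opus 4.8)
The plan is to study the action of $\pi$ on the associated Bass--Serre tree $T$, whose quotient graph is $\Gamma$, and to exhibit an element of infinite order in $N_\pi(C)$. I would lift the edge $e$ to an edge $\widetilde{e}$ of $T$ with endpoints $P$ and $Q$, chosen so that the stabilizers are $\mathrm{Stab}(P)=G_{o(e)}$, $\mathrm{Stab}(Q)=G_{t(e)}$ and $\mathrm{Stab}(\widetilde{e})=\mathrm{Stab}(P)\cap\mathrm{Stab}(Q)=G_e$, all identifications being compatible with the inclusions of $G_e$ into $G_{o(e)}$ and $G_{t(e)}$ built into the tree. Note that $P\not=Q$ even when $e$ is a loop, so the argument will treat the amalgamated-product and HNN cases uniformly. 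Since $C\leq G_e$, the subgroup $C$ fixes $\widetilde{e}$ pointwise, and the three normalizers in the hypothesis become exactly $N_{G_{o(e)}}(C)=N_\pi(C)\cap\mathrm{Stab}(P)$, $N_{G_{t(e)}}(C)=N_\pi(C)\cap\mathrm{Stab}(Q)$ and $N_{G_e}(C)=N_\pi(C)\cap\mathrm{Stab}(P)\cap\mathrm{Stab}(Q)$.

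First I would use the two strict inclusions to extract witnesses. Because $N_{G_e}(C)$ is properly contained in $N_{G_{o(e)}}(C)$, there is an element $a\in N_\pi(C)\cap\mathrm{Stab}(P)$ with $a\notin\mathrm{Stab}(Q)$; symmetrically there is $b\in N_\pi(C)\cap\mathrm{Stab}(Q)$ with $b\notin\mathrm{Stab}(P)$. Thus $a$ and $b$ both normalize $C$, while $a$ fixes $P$ but moves $Q$ and $b$ fixes $Q$ but moves $P$. It then suffices to show that the product $ab\in N_\pi(C)$ has infinite order.

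For this I would show that $ab$ acts on $T$ as a hyperbolic isometry. Both $a$ and $b$ are elliptic, with fixed-point subtrees $\mathrm{Fix}(a)\ni P$ and $\mathrm{Fix}(b)\ni Q$, and moreover $Q\notin\mathrm{Fix}(a)$ and $P\notin\mathrm{Fix}(b)$. The key step is to check that these two subtrees are disjoint. If $x$ lay in both, then, $P$ and $Q$ being adjacent, the median of $\{x,P,Q\}$ would be one of $P$ or $Q$; in the first case $P\in[x,Q]\subseteq\mathrm{Fix}(b)$ and in the second $Q\in[x,P]\subseteq\mathrm{Fix}(a)$ (by convexity of fixed-point sets), each contradicting the previous paragraph. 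With $\mathrm{Fix}(a)\cap\mathrm{Fix}(b)=\emptyset$, the standard fact that the product of two elliptic isometries of a tree with disjoint fixed-point sets is hyperbolic, translating by twice the length of the bridge between the two subtrees, shows that $ab$ is hyperbolic, hence of infinite order. As $ab\in N_\pi(C)$, this proves $N_\pi(C)$ infinite.

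The main obstacle is this last step, establishing that $ab$ has infinite order; everything before it is bookkeeping with stabilizers in $T$. In the amalgamated-product case ($o(e)\not=t(e)$) one could instead argue purely algebraically: $G_{o(e)}*_{G_e}G_{t(e)}$ embeds in $\pi$, and $a,b$ lying outside $G_e$ makes $(ab)^n$ a reduced word of length $2n$ for every $n\geq1$, so $ab$ has infinite order by the normal form theorem. I prefer the tree argument because it dispatches the loop case at no extra cost, the only real content there being the disjointness of the two fixed-point subtrees.
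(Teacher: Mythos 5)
Your proof is correct and follows essentially the same route as the paper: the paper's one-line argument picks $g_o\in N_{G_{o(e)}}(C)\setminus G_e$ and $g_t\in N_{G_{t(e)}}(C)\setminus G_e$ and asserts that $g_og_t$ normalizes $C$ and has infinite order, which is exactly your element $ab$. The only difference is that you supply the Bass--Serre--tree justification (disjoint fixed subtrees, product of elliptics is hyperbolic) for the infinite-order claim that the paper leaves as a standard fact.
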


\begin{proof}
If $g_o\in{G_{o(e)}}-G_e$ and $g_t\in{G_{t(e)}}-G_e$ each normalize $C$ then 
$g_og_t$ normalizes $C$ and has infinite order in $\pi$.
\end{proof}

\section{Turaev's criterion and Crisp's Theorem}

If $K$ is an $n$-dimensional complex and $w:\pi=\pi_1(K)\to\{\pm1\}$ 
is a homomorphism let $C_*=C_*(\widetilde{K})$ be the cellular chain complex 
of the universal cover and let $DC_*$ be the dual chain complex with
$DC_q=\overline{Hom_{\mathbb{Z}[\pi]}(C_{n-q},\mathbb{Z}[\pi])}$ given by
dualizing, defining a left module structure by $(g\delta)(c)=w(g)\delta(c)g^{-1}$
for all $g\in\pi$, $\delta\in{DC_q}$ and $c\in{C_{n-q}}$, and reindexing.
Then $K$ {\it satisfies Poincar\'e duality with local coefficients 
and orientation character $w$}
if and only if $H_n(\mathbb{Z}^w\otimes_{\mathbb{Z}[\pi]}C_*)\cong{Z}$ and 
there is a chain homotopy equivalence $DC_*\simeq{C_*}$ 
given by slant product with an $n$-cycle which generates this group \cite{wall}.
We shall call such a complex a $PD_n$-{\it space\/};
it is a $PD_n$-complex if and only if $\pi$ is finitely presentable 
\cite{browd}.
Closed $n$-manifolds are finite $PD_n$-complexes.
Although our main concern in this paper is with $PD_3$-complexes,
we have given the broader definition as $PD_n$-spaces arise naturally 
in connection with Poincar\'e duality groups 
(see \cite{da00} and \S9 below),
and as covering spaces of manifolds \cite{HK}.

In dimensions $n\leq3$ it suffices to know that there there is 
some chain homotopy equivalence $DC_*\simeq{C_*}$.
The next result is substantially based on ideas from \cite{tur},
but has somewhat different hypotheses.
If $M$ is a left $\mathbb{Z}[\pi]$-module 
let $e^jM=Ext^j_{\mathbb{Z}[\pi]}(M,\mathbb{Z}[\pi])$.

\begin{theorem} 
Let $K$ be a connected $3$-complex and $w:\pi=\pi_1(K)\to\{\pm1\}$ 
be a homomorphism.
If $C_*(\widetilde{K})$ is chain homotopy equivalent to a finite projective 
$\mathbb{Z}[\pi]$-complex $C_*$ such that $C_*$ and $DC_*$ 
are chain homotopy equivalent then $K$ is a $PD_3$-space.
\end{theorem}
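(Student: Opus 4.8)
The plan is to work throughout with the finite projective complex $C_*$ in place of $C_*(\widetilde K)$: since the two are chain homotopy equivalent and the $PD_3$-condition is an invariant of the chain homotopy type of $C_*(\widetilde K)$, it suffices to equip $C_*$ with the required duality. Before doing anything homological I would record the two geometric inputs that make the low-dimensional case tractable, namely $H_0(C_*)\cong\mathbb{Z}$ and $H_1(C_*)=0$; the first holds because $\widetilde K$ is connected and the second because $\widetilde K$ is simply connected, so $H_1(\widetilde K)=0$.

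The first genuine step is to verify the numerical half of the definition, $H_3(\mathbb{Z}^w\otimes_{\mathbb{Z}[\pi]}C_*)\cong\mathbb{Z}$. Applying $\mathbb{Z}^w\otimes_{\mathbb{Z}[\pi]}-$ to the hypothesised equivalence $C_*\simeq DC_*$, and using the natural isomorphism $\mathbb{Z}^w\otimes_{\mathbb{Z}[\pi]}DC_*\cong\mathrm{Hom}_{\mathbb{Z}[\pi]}(C_*,\mathbb{Z})$, in which the twist by $w$ built into $DC_*$ cancels the twist in $\mathbb{Z}^w$, one obtains $H_q(\mathbb{Z}^w\otimes C_*)\cong H^{3-q}(K;\mathbb{Z})$. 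In particular $H_3(\mathbb{Z}^w\otimes C_*)\cong H^0(K;\mathbb{Z})\cong\mathbb{Z}$, the last step again being connectedness. I then fix a generator $[K]$ of this group.

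The substance of the theorem is to promote the abstract equivalence to the \emph{geometric} duality, i.e.\ to show that slant product with $[K]$ defines a chain homotopy equivalence $\cap[K]\colon DC_*\to C_*$. As $C_*$ and $DC_*$ are finite projective complexes, $\cap[K]$ is a chain homotopy equivalence precisely when it induces isomorphisms on $H_q$ for every $q$, so only the four modules $H_q(C_*)$, $0\le q\le 3$, must be controlled. This is where the restriction $n\le 3$ is used, via two facts. First, slant product with a fundamental class is self-dual up to sign and chain homotopy under the canonical identification $DDC_*\simeq C_*$, so $\cap[K]$ is an isomorphism on $H_q$ if and only if it is one on $H_{3-q}$; this pairs the degrees $0\leftrightarrow 3$ and $1\leftrightarrow 2$. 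Secondly, the equivalence $C_*\simeq DC_*$ imposes the Poincar\'e pattern on homology, and in particular $H_1(DC_*)\cong H_1(C_*)=0$; thus the map on $H_1$ is trivially an isomorphism, whence by self-duality so is the map on $H_2$. It therefore remains to treat degree $0$ alone (degree $3$ then following by duality), where $\cap[K]\colon H_0(DC_*)\cong\mathbb{Z}\to H_0(C_*)\cong\mathbb{Z}$ must be shown to be multiplication by a unit.

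I expect the main obstacle to be precisely these last two points: establishing the self-duality of the slant-product map carefully enough to identify the dual of $\cap[K]$ with $\pm\cap[K]$, and checking that a generator of $H_3(\mathbb{Z}^w\otimes C_*)$ acts on $H_0$ by $\pm 1$. For the latter I would track the identifications of the previous paragraph, using that the degree homomorphism $H_3(\mathbb{Z}^w\otimes C_*)\to\mathbb{Z}$ is the isomorphism already produced in Step~2 and that the given abstract equivalence acts by a unit on $H_0$, so that the coefficient is forced to be $\pm 1$. Once both points are in hand, the comparison of homology above, equivalently the acyclicity of the self-dual mapping cone of $\cap[K]$ (whose homology must vanish everywhere because it vanishes in degrees $0$ and $1$), shows that $\cap[K]$ is a chain homotopy equivalence. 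Hence $C_*(\widetilde K)$ satisfies Poincar\'e duality with orientation character $w$, and $K$ is a $PD_3$-space.
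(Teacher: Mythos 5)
Your architecture is the right one and matches the paper's: pass to the finite projective complex $C_*$, identify $H_3(\mathbb{Z}^w\otimes_{\mathbb{Z}[\pi]}C_*)\cong H_3(\mathbb{Z}^w\otimes_{\mathbb{Z}[\pi]}DC_*)\cong H^0(K;\mathbb{Z})\cong\mathbb{Z}$ via the hypothesised equivalence, and then upgrade the abstract equivalence to the slant product $\theta_*=\cap[K]$ by exploiting the self-duality $D\theta_*\simeq\theta_*$ (which the paper obtains from an equivariant diagonal $\Delta$ and the chain homotopy $\tau\Delta\simeq\Delta$; you flag this but do not supply it). The genuine gap is in the middle dimension. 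Over $\mathbb{Z}[\pi]$ there is no clean pairing ``$H_q(\theta_*)$ iso $\Leftrightarrow$ $H_{3-q}(\theta_*)$ iso'': the self-duality gives $H_q(\theta_*)=H_q(D\theta_*)=H^{3-q}(\theta^*)$, and by the universal coefficient spectral sequence $H^{3-q}$ is assembled from the terms $e^jH_{3-q-j}$ with $0\le j\le 3-q$, not from $H_{3-q}$ alone. In particular $H_2(D\theta_*)=H^1(\theta^*)$ is controlled by $e^0H_1(\theta_*)$ \emph{and} $e^1H_0(\theta_*)$; since $H_1$ vanishes on both sides, the first contribution is zero and all the content sits in $e^1H_0(\theta_*)$. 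So the degree-$2$ isomorphism is a consequence of the degree-$0$ isomorphism, not of the (vacuous) degree-$1$ one, and your order of deduction --- degree $2$ from degree $1$, before degree $0$ has been established --- does not close up.

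The same point defeats your closing argument: a self-dual finite projective complex whose homology vanishes in degrees $0$ and $1$ need not be acyclic, since a complex consisting of a single self-conjugate projective module concentrated in the middle degree with zero differential is self-dual and has nonzero middle homology. The repair is exactly the paper's step: since $H_1(C_*)=H_1(DC_*)=0$, one has natural identifications $H^1(C^*)\cong e^1H_0(C_*)$ and $H^1(DC^*)\cong e^1H_0(DC_*)$, so once $H_0(\theta_*)$ is shown to be an isomorphism (evaluation against the chosen generator), $H_2(\theta_*)=H_2(D\theta_*)=H^1(\theta^*)\cong e^1H_0(\theta_*)$ is an isomorphism as well, and with degrees $0,1,3$ already handled $\theta_*$ is a chain homotopy equivalence of finite projective complexes. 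Note also that the paper separates the case of finite $\pi$, where $H^0(C^*)\cong\mathbb{Z}$ forces $H_3(C_*)\cong\mathbb{Z}$ and $H_1=H_2=0$, so that $\widetilde K\simeq S^3$ and one quotes Wall; if you prefer a uniform treatment you must likewise derive the degree-$3$ statement from $H^0(\theta^*)\cong e^0H_0(\theta_*)$, i.e.\ again from degree $0$ rather than from a formal pairing.
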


\begin{proof}
Let ${C_*\otimes_\mathbb{Z}{C_*}}$ have the diagonal left $\pi$-action,
and let $\tau(x\otimes{y})=(-1)^{pq}y\otimes{x}$ for all $x\in{C_p}$ and $y\in{C_q}$.
Let $\Delta:C_*\to{C_*\otimes_\mathbb{Z}{C_*}}$ be an equivariant diagonal.
Then $\tau\Delta$ is also a diagonal homomorphism, and so is chain homotopic to $\Delta$.
Let $\kappa\in{C_3}$ be a 3-chain such that $1\otimes\kappa$ is a cycle representing
a generator $[K]$ of $H_3(\mathbb{Z}^w\otimes_{\mathbb{Z}[\pi]}C_*)\cong
{H_3(\mathbb{Z}^w\otimes_{\mathbb{Z}[\pi]}DC_*)}=H^0(C^*;\mathbb{Z})\cong{Z}$,
and let $\Delta(\kappa)=\Sigma{x_i}\otimes{y_{3-i}}$.
Slant product with $1\otimes\kappa$ defines a chain map $\theta_*:DC_*\to{C_*}$ by
$\theta(\phi)=\Sigma\phi(x_{3-j})y_j$ for all $\phi\in{DC_j}$.
The double dual $DDC_*$ is naturally isomorphic to $C_*$,
and the ``symmetry" of $\Delta$ with respect to the transposition $\tau$ implies that
$D\theta_*$ and $\theta_*$ are chain homotopic, as in \cite{tur}.

Suppose first that $\pi$ is finite.
Then $H^0(C^*)\cong{Z}$ and $H^1(C^*)=0$, so $H_2(C_*)=H_1(C_*)=0$ and $H_3(C_*)\cong{Z}$.
Therefore $\widetilde{K}\simeq{S^3}$ and so $K$ is a $PD_3$-complex by \cite{wall}.

If $\pi$ is infinite $H_3(DC_*)=H^0(C^*)=0$.
Since $H_1(DC^*)=H_1(C_*)=0$ and $H_0(DC_*)=H^3(C^*)\cong{H_0(C_*)}\cong{Z}$,
$H_i(\theta_*)$ is an isomorphism for all $i\not=2$.
In particular, since $H_0(\theta_*)$ is an isomorphism 
the dual $\theta^*:C^*\to{DC^*}$ also induces an isomorphism 
$H^1(C^*)\cong{e^1H_0(C_*)}\cong{H^1(DC^*)}\cong{e^1H_0(DC_*)}$.
Hence $H_2(\theta_*)=H_2(D\theta_*)$ is also an isomorphism,
and so $\theta$ is a chain homotopy equivalence.
Therefore $K$ is a $PD_3$-space.
\end{proof}

A similar (and easier) result is true for complexes of dimension 1 or 2.
On the other hand, the 1-connected space $S^2\vee{S^4}$ is not a $PD_4$-complex,
although it has a cell structure with just 3 cells, 
and its cellular chain complex is obviously isomorphic to its linear dual.

Turaev's characterization of the possible group-pairs $(\pi,w)$ 
of $PD_3$-complexes 
is a fairly straightforward consequence of this theorem.

\smallskip
\noindent{\bf Turaev's Criterion.}
{\sl
Let $\pi$ be a finitely presentable group and $w:\pi\to\{\pm1\}$ a
homomorphism.
Then there is a $PD_3$-complex $K$ with $\pi_1(K)\cong\pi$ and 
$w_1(K)=w$ if and only if $[I_\pi]=[J_\pi]$.
}

\begin{proof}
Let $K$ be a connected $PD_3$-complex with $\pi_1(K)\cong\pi$ and 
$w_1(K)=w$.
We may assume that $K$ has a single $0$-cell and finite 2-skeleton, 
and that $C_*$ and $DC_*$ are finitely generated projective 
$\mathbb{Z}[\pi]$-complexes.
Then $C_0\cong\mathbb{Z}[\pi]$ and 
$\mathrm{Cok}(\partial_2^C)=\mathrm{Im}(\partial_1^C)$ 
is the augmentation ideal $I_\pi$.
The Fox-Lyndon free differential calculus gives a matrix $M$ 
for $\partial_2^C$ with respect to the bases represented 
by chosen lifts of the cells of $K$.
Since $H_0(C_*)\cong{H_0(DC_*)}\cong{Z}$ and
$I_\pi=\mathrm{Cok}(\partial_2^C)$,
Schanuel's Lemma implies that 
$I_\pi\oplus{DC_0}\cong{\mathrm{Cok}(\partial_2^D)}\oplus{C_0}$.
Since $\partial_2^D$ has matrix $\overline{M}^{tr}$ 
it follows that $[I_\pi]=[J_\pi]$.

Conversely, let $K$ be the finite 2-complex associated to a presentation 
for $\pi$, and define $J_\pi$ by means of the Fox-Lyndon matrix.
Suppose first that $J_\pi\oplus\mathbb{Z}[\pi]^m\cong{I_\pi}\oplus\mathbb{Z}[\pi]^n$.
Let $L=K\vee{mD^3}$ be the 3-complex obtained by subdividing the 1-skeleton of $K$
at $n$ points distinct from the basepoint and giving each of the 3-discs 
the cell structure $D^3=e^0\cup{e^2}\cup{e^3}$.
Then $L\simeq{K}$ and $\mathrm{Cok}(\partial_2^L)\cong{I_\pi}\oplus\mathbb{Z}[\pi]^n$.
Let $DC_1=\overline{Hom_{\mathbb{Z}[\pi]}(C_2(\widetilde{L}),\mathbb{Z}[\pi])}$
and let $\alpha:DC_1\to\mathbb{Z}[\pi]$ be the composite
of the projection onto $J_\pi\oplus\mathbb{Z}[\pi]^m$, 
the isomorphism with ${I_\pi}\oplus\mathbb{Z}[\pi]^n$, 
the projection onto $I_\pi$ and the inclusion into $\mathbb{Z}[\pi]$.
Then $\bar\alpha^{tr}:\mathbb{Z}[\pi]\to{C_2}(\widetilde{L})$
has image in $\pi_2(L)=H_2(C_*(\widetilde{L}))$ and so we may attach another $3$-cell along
a map $f$ in the homotopy class of $\bar\alpha^{tr}(1)$.
The resulting $3$-complex $X=L\cup_fe^3$ satisfies the hypothesis of Theorem 3.1, and so
$X$ is a finite $PD_3$-complex with fundamental group $\pi$.
In general, if the projective summands are not stably isomorphic, 
we must adjoin infinitely many 2- and 3-cells, 
to get a finitely dominated $PD_3$-complex,
as in \cite{tur}.
\end{proof}

We should emphasize that this is only part of Turaev's determination of
the characteristic triples $(\pi,w,\mu)$ (with $\mu\in{H_3}(\pi;\mathbb{Z}^w)$) 
realized by $PD_3$-complexes \cite{tur}. (See also \S8 below.)
Turaev also reproved Hendrik's result that the homotopy type of
a $PD_3$-complex is determined by its characteristic triple \cite{He},
and obtained the splitting theorem stated in the first paragraph 
of the introduction as a consequence \cite{tur}.
A similar argument shows that if $\pi$ is $FP_2$ 
(but not finitely presentable) and $[I_\pi]=[J_\pi]$ 
then $(\pi,w)$ is realized by a $PD_3$-space.

We shall use this criterion to exclude some pairs $(\pi,w)$,
usually by means of a homomorphism $f:\mathbb{Z}[\pi]\to{R}$, 
where the ring $R$ is torsion-free as an additive group,
and such that the $\mathbb{Z}$-torsion submodules of $R\otimes_fI_\pi$ 
and $R\otimes_fJ_\pi$ are not isomorphic.
(See Theorems 4.6 and 7.4 below.)
On the other hand, 
we shall justify our constructions of new orientable examples 
by means of Theorem 3.1.

We shall also use repeatedly the following result from 
\cite{crisp} (often together with Lemma 2.5).

\smallskip
\noindent{\bf Crisp's Theorem.}
{\sl
If $X$ is a $PD_3$-complex and $g\in\pi=\pi_1(X)$ has prime order $p$ 
and infinite centralizer $C_\pi(g)$ then $p=2$, 
$g$ is orientation-reversing and $C_\pi(g)$ has two ends.
}

\smallskip
\noindent
Since the automorphism group of a finite group is finite this has the immediate
consequence that if $X$ is orientable and $G$ is a nontrivial finite subgroup
of $\pi$ then $N_\pi(G)$ is finite.

\section{vertex groups have periodic cohomology}

In this section we shall consider orientable $PD_3$-complexes whose 
fundamental groups are fundamental groups of finite graphs of finite groups.
All such groups are finitely presentable.
(However the complexes need not be finite \cite{th}.)

\begin{lemma} 
Let $X$ be an orientable $PD_3$-complex with $\pi=\pi_1(X)\cong\pi\mathcal{G}$,
where $(\mathcal{G},\Gamma)$ is a reduced finite graph of finite groups.
If $(\mathcal{G},\Gamma)$ has a loop isomorphism then $\pi$ 
has a nontrivial free factor.
\end{lemma}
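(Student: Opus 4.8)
The plan is to show that the vertex group sitting at the loop must be trivial, and then to invoke Lemma 2.5. First I would unpack what a loop isomorphism entails. Because the graph of groups is reduced, an edge $e$ with $G_e=G_{o(e)}$ cannot have $o(e)\neq t(e)$ (that would force $G_e$ to be \emph{properly} contained in $G_{o(e)}$), so $e$ is a genuine loop at a single vertex $v=o(e)=t(e)$, the edge group equals the whole vertex group, $G_e=G_v$, and $\phi_e$ is an automorphism of $G_v$. A maximal tree contains no loop, so $e\notin E(T)$ and $t_e$ is a bona fide stable letter, subject to $t_e g t_e^{-1}=\phi_e(g)$ for all $g\in G_v$.

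The key step is the observation that $t_e$ normalizes $G_v$: the defining relation gives $t_eG_vt_e^{-1}=\phi_e(G_v)=G_v$ precisely because $\phi_e$ is an automorphism. Since $t_e$ is a stable letter it has infinite order, so $N_\pi(G_v)$ contains $\langle G_v,t_e\rangle$ and is therefore infinite. Now suppose $G_v\neq1$. Then $G_v$ is a nontrivial finite subgroup of $\pi$ with infinite normalizer, which directly contradicts the consequence of Crisp's Theorem recorded above, namely that when $X$ is orientable every nontrivial finite subgroup of $\pi$ has finite normalizer. (Equivalently, one could pick $g\in G_v$ of prime order, note that a suitable power $t_e^k$ centralizing $G_v$ makes $C_\pi(g)$ infinite, and apply Crisp's Theorem to conclude that $g$ is orientation-reversing, which is impossible for orientable $X$.) Hence $G_v=1$.

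With $G_v=1$ we also have $G_e=1$, so $(\mathcal{G},\Gamma)$ has an edge with trivial edge group and Lemma 2.5 applies. Removing a loop does not disconnect $\Gamma$, so $\Gamma-\{e\}$ is again connected and we are in the second case of that lemma: a maximal tree for $\Gamma-\{e\}$ remains maximal for $\Gamma$, the HNN relation across $e$ is vacuous, and $t_e$ generates a free factor $\cong Z$ of $\pi$. This $Z$ is the desired nontrivial free factor. I expect no serious obstacle here; the only points that need care are the reducedness observation that pins $e$ as a loop at a single vertex and the normal-form fact (Britton's Lemma for HNN extensions) guaranteeing that the stable letter $t_e$ genuinely has infinite order, so that $N_\pi(G_v)$ is honestly infinite.
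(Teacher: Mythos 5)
Your proof is correct and follows essentially the same route as the paper's: $t_e$ normalizes the (whole) vertex group, Crisp's Theorem forces that group to be trivial since $X$ is orientable, and then the trivial-edge-group lemma (Lemma~2.3 in the paper, not 2.5) yields the free factor generated by $t_e$. Your extra care in using reducedness to pin $e$ as a genuine loop at a single vertex, and in checking that removing a loop does not disconnect $\Gamma$, only makes explicit what the paper leaves implicit.
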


\begin{proof}
If $(\mathcal{G},\Gamma)$ has a loop isomorphism at the edge $e$
then $t_e$ normalizes $G_e$, and so $N_\pi(G_e)$ is infinite.
Therefore $G_e=1$, by Crisp's Theorem, and so $t_e$ generates 
a free factor of $\pi$.
\end{proof}

A finitely generated group is the fundamental group of a finite graph 
of finite groups if and only if it is virtually free.
(See Corollary IV.1.9 of \cite{dd}.)
If $\pi$ has a free normal subgroup $F$ of finite index then the canonical
surjection $s:\pi\to{G}=\pi/F$ is injective on every finite subgroup of $\pi$.
In particular, if $H$ is a finite subgroup of $\pi$ then the subgroup $FH=s^{-1}s(H)$ 
generated by $F$ and $H$ is a semidirect product $F\rtimes{H}$.

\begin{lemma} 
Let $X$ be an indecomposable orientable $PD_3$-complex.
If $\pi=\pi_1(X)$ has a free normal subgroup $F$ such that $\pi/F$ 
is a finite nilpotent group then $\pi$ is cyclic or 
$\pi\cong{Q(2^k)}\times{Z/dZ}$ for some $k\geq3$ and odd $d$.
\end{lemma}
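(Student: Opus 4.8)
The plan is to first present $\pi$ as the fundamental group of a reduced finite graph of finite groups, then exploit the nilpotency of $\pi/F$ to force $\pi$ to be finite, and finally read off the structure of a finite nilpotent group with periodic cohomology.

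Since $X$ is a $PD_3$-complex, $\pi$ is finitely presentable, and as it has a free normal subgroup of finite index it is virtually free; hence $\pi\cong\pi\mathcal{G}$ for some reduced finite graph of finite groups $(\mathcal{G},\Gamma)$. Because $X$ is indecomposable, $\pi$ is not a nontrivial free product, since such a decomposition would realize $X$ as a connected sum (Turaev). Every finite subgroup $H\leq\pi$ meets the torsion-free group $F$ trivially, so $H$ embeds in the finite nilpotent group $\pi/F$ and is itself nilpotent; in particular all the vertex and edge groups $G_v,G_e$ are finite nilpotent. As $X$ is orientable, the consequence of Crisp's Theorem recorded after its statement gives that $N_\pi(G)$ is finite for every nontrivial finite subgroup $G$.

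The heart of the argument is to show that $\pi$ is finite. Suppose not. The case $\pi\cong{Z}$ already falls under the conclusion (it is cyclic), so I may assume $\Gamma$ has at least one edge, and moreover that no edge group is trivial, since otherwise Lemma 2.3 would force $\pi$ to be a nontrivial free product or $Z$. Fix an edge $e$ with $G_e\not=1$. If $e$ is a loop with $G_e=G_{o(e)}$ then $t_e$ normalizes $G_e$, so $N_\pi(G_e)$ is infinite, contradicting orientability. Otherwise $G_e$ is a \emph{proper} subgroup of each incident vertex group: by reducedness when $o(e)\not=t(e)$, and because a loop isomorphism is excluded (Lemma 4.1) when $o(e)=t(e)$. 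Here nilpotency enters decisively, through the normalizer condition: a proper subgroup of a finite nilpotent group is properly contained in its normalizer, so $N_{G_{o(e)}}(G_e)$ and $N_{G_{t(e)}}(\phi_e(G_e))$ strictly contain $G_e$ and $\phi_e(G_e)$ respectively. Lemma 2.4, applied with $C=G_e$, then shows $N_\pi(G_e)$ is infinite, once more contradicting the Crisp consequence. Hence $\Gamma$ has no edges, and $\pi=G_v$ is finite.

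It remains to classify finite nilpotent $\pi$. As in the proof of Theorem 3.1, an orientable $PD_3$-complex with finite $\pi$ has $\widetilde{X}\simeq{S^3}$, so $\pi$ has periodic cohomology (of period dividing $4$). A finite nilpotent group is the direct product of its Sylow subgroups, and periodicity is detected on each Sylow subgroup, forcing each to be cyclic except that the $2$-Sylow subgroup may instead be generalized quaternionic of the form $Q(2^k)$ with $k\geq3$. The odd part is therefore cyclic of some odd order $d$, and $\pi$ is either cyclic or $Q(2^k)\times{Z/dZ}$, as claimed. I expect the main obstacle to be the central step: arranging the interaction of the normalizer condition, reducedness, and the two loop cases so that Lemma 2.4 (or the direct $t_e$ argument) applies and produces an infinite normalizer of a nontrivial finite subgroup.
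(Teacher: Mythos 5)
Your proof is correct and follows essentially the same route as the paper: reduce to a reduced graph of finite groups without loop isomorphisms, use nilpotency of the vertex groups and the normalizer condition together with Lemma 2.4 and Crisp's Theorem to force all edge groups to be trivial, invoke indecomposability to conclude $\Gamma$ is a single vertex, and then classify finite nilpotent groups acting freely on $\widetilde{X}\simeq S^3$. Your treatment is slightly more explicit about the loop cases and the role of Lemma 2.3, but the argument is the same.
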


\begin{proof}
If $\pi$ has a free factor then $\pi\cong{Z}$.
Otherwise we may assume that $\pi=\pi\mathcal{G}$,
where $(\mathcal{G},\Gamma)$ is a reduced finite graph of finite groups
with no loop isomorphisms.
Thus each edge group $G_e$ is a proper subgroup of each of $G_{o(e)}$ and $G_{t(e)}$.
The vertex groups are nilpotent since they map injectively to $\pi/F$.
Hence the normalizer of $G_e$ in each of $G_{o(e)}$ and $G_{t(e)}$ is strictly
larger than $G_e$, since nilpotent groups satisfy the normalizer condition.
(See Chapter 5, \S2 of \cite{rob}.)
Hence $N_\pi(G_e)$ is infinite, by Lemma 2.5, and so $G_e=1$.

Since $X$ is indecomposable so is $\pi$,
and since $\pi$ has no free factor $\Gamma$ 
has one vertex and no edges.
Hence $\pi$ is finite, and so $\widetilde{X}\simeq{S^3}$.
Therefore $\pi$ has cohomological period dividing 4.
Since it is nilpotent it is cyclic or the direct product of a cyclic group
of odd order with a quaternionic 2-group $Q(2^k)$, for some $k\geq3$.
\end{proof}

\begin{theorem} 
Let $X$ be an orientable $PD_3$-complex with $\pi=\pi_1(X)\cong\pi\mathcal{G}$,
where $(\mathcal{G},\Gamma)$ is a reduced finite graph of finite groups.
Then the vertex groups have periodic cohomology and the edge groups are metacyclic.
\end{theorem}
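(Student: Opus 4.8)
The plan is to analyse the action of $\pi$ on the Bass--Serre tree $T$ of the reduced graph of groups $(\mathcal{G},\Gamma)$, whose vertex and edge stabilizers are the conjugates of the finite vertex and edge groups. Two inputs drive everything. Since $X$ is orientable no nontrivial element is orientation-reversing, so Crisp's Theorem gives that every element of prime order has finite centralizer, and hence (its stated consequence) that $N_\pi(H)$ is finite for every nontrivial finite subgroup $H\le\pi$. Against this, Lemma 2.4 is the device for manufacturing \emph{infinite} normalizers out of the reduced structure. I would establish the two conclusions in order, proving first that each vertex group has periodic cohomology and then, using that, that each edge group is metacyclic.

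To see that a vertex group $G_v$ is periodic it suffices to show that it contains no subgroup isomorphic to $(Z/pZ)^2$, since a finite group has periodic cohomology precisely when each of its Sylow subgroups is cyclic or quaternionic. Here I would use Turaev's Splitting Theorem \cite{tur} together with Crisp's Theorem, as signalled in the Introduction. Concretely, were $A\cong(Z/pZ)^2\le G_v$, I would try to exhibit a subgroup $C\le A$ of order $p$ lying in an edge group $G_e$ at $v$ whose normalizer grows strictly on passing into \emph{both} adjacent vertex groups; as $A$ is abelian it normalizes $C$, and this would feed Lemma 2.4 to produce an infinite $N_\pi(C)$, contradicting Crisp's Theorem. (Morally one expects each $G_v$ to be realised by a finite $PD_3$-complex, whose universal cover is a homotopy $3$-sphere, so that the free $G_v$-action forces periodicity; the single-vertex, no-edge case where $\pi=G_v$ is finite and $\widetilde X\simeq S^3$ is the degenerate instance of this.) I expect this step to be the main obstacle: Crisp's hypothesis is global, a condition on centralizers in $\pi$, whereas what one must control is the \emph{local} Sylow structure of one vertex group, and the difficulty lies exactly in arranging the two-sided normalizer growth demanded by Lemma 2.4 -- this is where Turaev's Splitting Theorem is meant to do the real work.

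Granting that the vertex groups are periodic, the edge groups are periodic too, being subgroups of them; in particular their odd Sylow subgroups are cyclic, so the only obstruction to metacyclicity is a quaternionic Sylow $2$-subgroup. Suppose some $G_e$ is not metacyclic. By Lemma 2.1 it has a unique involution $z$, central in $G_e$, and $4$ divides $|G_e|$, so its Sylow $2$-subgroup is non-cyclic and therefore quaternionic. Since $G_e\le G_{o(e)}$ and $G_e\le G_{t(e)}$, both adjacent vertex groups then have non-cyclic, hence quaternionic, Sylow $2$-subgroups; being periodic and not metacyclic, each has by Lemma 2.1 a unique involution, which must be $z$. Thus $z$ is central in each of $G_{o(e)}$ and $G_{t(e)}$, so $N_{G_{o(e)}}(\langle z\rangle)=C_{G_{o(e)}}(z)=G_{o(e)}$ and likewise for $G_{t(e)}$. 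As the graph is reduced, $G_e$ is properly contained in each of $G_{o(e)}$ and $G_{t(e)}$, while $N_{G_e}(\langle z\rangle)=G_e$ because $z$ is central in $G_e$. Lemma 2.4, applied to $C=\langle z\rangle$, now makes $N_\pi(\langle z\rangle)$ infinite, contradicting the consequence of Crisp's Theorem since $\langle z\rangle$ is a nontrivial finite subgroup. Hence every edge group is metacyclic.

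Finally I would dispose of loop edges separately. A loop at $v$ with $G_e=G_v$ is a loop isomorphism, handled by Lemma 4.1; for a loop with $G_e$ properly contained in $G_v$ the same mechanism applies, using that an element of $G_v-G_e$ together with the stable letter $t_e$ normalizes $\langle z\rangle$ and has infinite order, so that Lemma 2.4 again furnishes the infinite normalizer needed for the contradiction.
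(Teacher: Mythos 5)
Your second half (edge groups are metacyclic) is correct and is essentially the paper's own argument: a non-metacyclic $G_e$ forces both adjacent vertex groups to be non-metacyclic, Lemma 2.1 then identifies a common central involution, and Crisp's Theorem is contradicted because the centralizer of that involution contains both vertex groups (or, for a loop, $G_v$ together with the stable letter). Your treatment of loops is also fine. But the first half --- that each vertex group has periodic cohomology --- is where the real content of the theorem lies, and there you have a strategy statement rather than a proof. You correctly diagnose the obstacle yourself: Lemma 2.4 needs the normalizer of $C$ to grow strictly on \emph{both} sides of an edge, and an edge group of the given graph of groups may perfectly well be self-normalizing in its vertex groups, so there is no way to feed a subgroup of a hypothetical $(Z/pZ)^2$ into Lemma 2.4 directly. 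You flag this as the main difficulty and leave it unresolved; that is a genuine gap.

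The missing idea is a change of graph-of-groups structure. Take a free normal subgroup $F$ of finite index in $\pi$ and, for a Sylow $p$-subgroup $S$ of $G_v$, form $FS=F\rtimes S$; this is the fundamental group of a finite covering space of $X$ (again an orientable $PD_3$-complex) and is the fundamental group of a finite graph of finite $p$-groups. By Turaev's Splitting Theorem that covering space is a connected sum of indecomposables, and for an indecomposable piece whose vertex groups are $p$-groups the normalizer condition for nilpotent groups supplies exactly the two-sided normalizer growth you were missing: every proper subgroup of a finite nilpotent group is proper in its normalizer, so Lemma 2.4 together with Crisp's Theorem forces every edge group of a reduced such structure to be trivial (this is the content of Lemma 4.2). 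Hence each indecomposable factor of $FS$ is $Z$ or finite, and a finite factor has universal cover homotopy equivalent to $S^3$ and therefore has periodic cohomology (it is cyclic or $Q(2^k)\times Z/dZ$). Since $S$ is finite it is conjugate into one of these factors, so $S$ is cyclic or quaternionic; as $p$ was arbitrary, $G_v$ has periodic cohomology by the Sylow criterion. Without this reduction to $p$-groups, or some substitute for it, your first step does not go through.
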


\begin{proof}
Let $F$ be a maximal free normal subgroup of $\pi$.
If $S$ is a Sylow $p$-subgroup of a vertex group $G_v$ then $FS$ 
is the fundamental group of a finite graph of $p$-groups.
The indecomposable factors of $FS$ are either infinite cyclic or are
finite and have periodic cohomology, by Lemma 4.2.
Therefore $S$ has periodic cohomology.
Since a finite group has periodic cohomology if and only if this holds 
for all its Sylow subgroups (see Proposition VI.9.3 of \cite{brown})
it follows that $G_v$ has periodic cohomology.

If $G_e$ is not metacyclic it has a central involution, 
which is a square,
by Lemma 2.1.
This involution is orientation preserving, 
and is also central in each of $G_{o(e)}$ and $G_{t(e)}$,
since they cannot be metacyclic.
This contradicts Crisp's Theorem.
\end{proof}

\begin{corollary} 
For any edge $e$ at least one of the vertex groups $G_{o(e)}$ or $G_{t(e)}$ is metacyclic.
If they are each metacyclic then $G_e$ is cyclic. 
\end{corollary}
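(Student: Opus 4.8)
The plan is to prove both assertions by contradiction, in each case exhibiting a nontrivial finite subgroup $C$ of $\pi$ whose normalizer $N_\pi(C)$ is infinite, which is impossible since $X$ is orientable (by the remark following Crisp's Theorem). The uniform mechanism for producing such a $C$ is Lemma 2.4: it suffices to find $C\leq{G_e}$ whose normalizer grows strictly on passing from $G_e$ into each of $G_{o(e)}$ and $G_{t(e)}$, and this treats tree, non-tree and loop edges alike. By Lemma 2.3 I may assume every edge group is nontrivial, since a trivial edge group gives a free factor; some such hypothesis is needed for the first assertion, as both vertex groups of $Q(8)*Q(8)$ are non-metacyclic.

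For the first assertion I would assume that neither $G_{o(e)}$ nor $G_{t(e)}$ is metacyclic. By Theorem 4.3 both have periodic cohomology, so Lemma 2.1 endows each with a unique central involution, say $z_o$ and $z_t$, which is moreover the only involution of that vertex group. I would then split on the parity of $|G_e|$. If $|G_e|$ is even then $G_e$ has an involution, and since $G_e\leq{G_{o(e)}}$ this involution can only be $z_o$; hence $z_o\in{G_e}$, its image $\phi_e(z_o)$ is the unique involution $z_t$ of $G_{t(e)}$, and both are central. Taking $C=\langle{z_o}\rangle$ one has $N_{G_{o(e)}}(C)=G_{o(e)}$ and $N_{G_{t(e)}}(C)=G_{t(e)}$, each strictly containing the image of $G_e$ because the graph is reduced, so Lemma 2.4 makes $N_\pi(C)$ infinite. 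If $|G_e|$ is odd then $z_o\notin{G_e}$, yet $z_o$ centralizes and so normalizes $G_e$, and likewise $z_t$ normalizes $\phi_e(G_e)$; taking $C=G_e$, Lemma 2.4 again forces $N_\pi(G_e)$ to be infinite. In both cases we contradict orientability.

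For the second assertion I would assume that $G_{o(e)}$ and $G_{t(e)}$ are metacyclic but $G_e$ is not cyclic. As subgroups of metacyclic groups are metacyclic, $G_e$ is metacyclic; since a metacyclic abelian group is cyclic, $G_e$ is nonabelian and $G_e'\neq1$. The decisive point is that $G_e'$ is automatically normal in each vertex group: from $G_e\leq{G_{o(e)}}$ we get $G_e'\leq{G_{o(e)}'}$, and $G_{o(e)}'$ is cyclic (it lies in the normal cyclic factor of the metacyclic group $G_{o(e)}$); a subgroup of a cyclic group is characteristic in it, and $G_{o(e)}'$ is characteristic in $G_{o(e)}$, so $G_e'$ is characteristic, hence normal, in $G_{o(e)}$. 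The same argument applies to $\phi_e(G_e')=(\phi_e{G_e})'$ in $G_{t(e)}$. Taking $C=G_e'$ we have $N_{G_{o(e)}}(C)=G_{o(e)}\supsetneq{G_e}$ and similarly on the target side, so Lemma 2.4 makes $N_\pi(G_e')$ infinite; as $G_e'$ is a nontrivial finite subgroup this contradicts orientability, and therefore $G_e$ is cyclic.

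The routine part throughout is the verification of the normalizer inclusions; the real content lies in choosing $C$. I expect the second assertion to be the more delicate: there one must recognize that a subgroup which is only characteristic in $G_e$, namely $G_e'$, is nonetheless promoted to a characteristic subgroup of each whole vertex group through the cyclic commutator subgroup, and this is exactly what feeds Lemma 2.4. In the first assertion the analogous worry — that a metacyclic $G_e$ may have many involutions — is resolved at once by the ambient non-metacyclic vertex group, which forces all of them to coincide with its central involution.
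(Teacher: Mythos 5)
Your proof is correct and follows essentially the same route as the paper's: both parts come down to exhibiting a nontrivial finite subgroup of an edge group that is normalized by elements of both vertex groups lying outside the edge group (the central involution, respectively the whole of $G_e$, in the first part, and $G_e'$ in the second), so that Lemma 2.4 together with Crisp's Theorem gives a contradiction. Your remark that the first assertion needs nontrivial edge groups (as $Q(8)*Q(8)$ shows) is a point the paper's proof passes over silently, and your verification that $G_e'$ is characteristic, hence normal, in each metacyclic vertex group via the cyclic commutator subgroup fills in a step the paper merely asserts.
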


\begin{proof}
If neither $G_{o(e)}$ nor $G_{t(e)}$ is metacyclic then each has a central involution, $g_o$ and $g_t$, say.
If $|G_e|$ is even then $g_o$ and $g_t$ are each in $\zeta{G_e}$, 
and hence are equal.
But then $N_\pi(g_o)$ contains both vertex groups,
and so is infinite.
If $|G_e|$ is odd it is properly contained in each of its normalizers.
In either case this contradicts Crisp's Theorem.

If $G_{o(e)}$ and $G_{t(e)}$ are each metacyclic then $G_e'$ is normal in each of them,
and so must be trivial, by Crisp's Theorem.
\end{proof}

\begin{corollary}  
If the orders of all the edge groups have a common prime factor $p$
then $\Gamma$ is a tree,
and there is at most one vertex group $V=G_v$ such that $G_e<N_V(G_e)$ for some edge $e$
with $v\in\{o(e),t(e)\}$.
\end{corollary}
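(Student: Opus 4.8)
The plan is to push everything through Crisp's Theorem in the form noted after its statement: since $X$ is orientable, $N_\pi(G)$ is finite for every nontrivial finite subgroup $G\leq\pi$. First I would record that \emph{all subgroups of order $p$ in $\pi$ are conjugate}. By Theorem 4.3 each vertex group has periodic cohomology, so its Sylow $p$-subgroups are cyclic or quaternionic and hence contain a unique subgroup of order $p$; as Sylow subgroups are conjugate, each vertex group has a single conjugacy class of order-$p$ subgroups. Since $p$ divides every edge group order and $\Gamma$ is connected, adjacent vertex groups have $\pi$-conjugate order-$p$ subgroups—those lying in the joining edge group—and propagating along $\Gamma$ identifies all order-$p$ subgroups of $\pi$ up to conjugacy. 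Fix such a $C$; then $N_\pi(C)$ is finite.

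For the tree assertion I argue by contradiction. If $\Gamma$ contained a circuit $v_0,e_1,\dots,e_k,v_k=v_0$ I would transport $C$ once around it: at each vertex the incoming order-$p$ subgroup and an order-$p$ subgroup of the next edge group are conjugate in $G_{v_i}$, while crossing $e_i$ conjugates by $t_{e_i}$. Composing the conjugators and the stable letters gives $\gamma\in\pi$ with $\gamma C\gamma^{-1}=C$. Killing the vertex groups defines a surjection of $\pi$ onto the free group on the non-tree edges, under which $\gamma$ maps to the nontrivial loop traced by the circuit; hence $\gamma$ has infinite order, $N_\pi(C)$ is infinite, and we reach a contradiction. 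So $\Gamma$ is a tree.

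The per-edge form of the second assertion is immediate from Lemma 2.4 with $C=G_e$: were both $N_{G_{o(e)}}(G_e)$ and $N_{G_{t(e)}}(G_e)$ to properly contain $G_e$, then $N_\pi(G_e)$ would be infinite, so each edge has at most one expanding endpoint. To upgrade this to \emph{at most one expanding vertex} I would assume $v\neq v'$ are both expanding, join them by the unique path in the tree, transport $C$ into the edge groups along that path, and try to splice the two expanding elements onto the hyperbolic element supported on the path, producing an infinite-order element normalizing an order-$p$ subgroup carried by a path edge.

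This globalization is the crux, and I expect it to be the real obstacle, because Crisp's Theorem alone does not suffice. The expanding behaviour at $v$ may be witnessed by an edge \emph{off} the path joining $v$ and $v'$, or may be \emph{absorbed} into a larger cyclic edge group lying \emph{on} it, so that Lemma 2.4 cannot be made to fire on any path edge: for instance the stacked graph $D_{2m}*_{Z/2Z}(Z/5Z\rtimes Z/4Z)*_{Z/4Z}(Z/5Z\rtimes Z/4Z)*_{Z/2Z}D_{2m'}$ has two expanding vertices, yet every element of prime order has finite centralizer. The decisive input must therefore be Turaev's Criterion rather than Crisp's Theorem: one shows that two independent expanding vertices force $[I_\pi]\neq[J_\pi]$, by exhibiting a homomorphism $f\colon\mathbb{Z}[\pi]\to R$ with $R$ torsion-free for which the $\mathbb{Z}$-torsion submodules of $R\otimes_f I_\pi$ and $R\otimes_f J_\pi$ are non-isomorphic, so that no such $\pi$ underlies a $PD_3$-complex. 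Producing $f$ and computing the torsion discrepancy is where I expect the work to lie.
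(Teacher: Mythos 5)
Your treatment of the first assertion is essentially the paper's own proof: transport the (unique up to conjugacy) order-$p$ subgroup around a cycle containing a non-tree edge $e$ and observe that the resulting element ($t_ew^{-1}$ in the paper's notation) has infinite order and normalizes $C_{o(e)}$, contradicting Crisp's Theorem. That part is correct and needs no further comment.

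The genuine gap is that you do not prove the second assertion at all; you stop at explaining why you expect it to be hard and conjecture that Turaev's Criterion must be invoked. The paper does \emph{not} use Turaev here: it completes the globalization with Crisp's Theorem alone. Given an expanding vertex $V=G_v$ with witnessing edge $e$, it first upgrades $G_e<N_V(G_e)$ to $N_{G_e}(C_v)<N_V(C_v)$ (an element of $N_V(G_e)\setminus G_e$ conjugates $C_v$ to another order-$p$ subgroup of $G_e$, hence can be corrected by an element of $G_e$ to normalize $C_v$ itself, and the corrected element still lies outside $G_e$); it then conjugates $C_v$ to $C_w$ by an element $a$ supported on the minimal path from $v$ to $w$ and asserts that $\langle N_W(C_w),\,aN_V(C_v)a^{-1}\rangle$, which normalizes $C_w$, is infinite. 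Your stacked example $D_{2m}*_{Z/2Z}(Z/5Z\rtimes Z/4Z)*_{Z/4Z}(Z/5Z\rtimes Z/4Z)*_{Z/2Z}D_{2m'}$ lands precisely on the point where that final assertion needs more than is written: there both normalizers coincide with the $Z/4Z$ stabilizing the connecting edge, the generated subgroup is finite, and every finite subgroup of $\pi$ has finite normalizer, so no contradiction with Crisp's Theorem is available. (The argument does close whenever $N_V(C_v)$ and $aN_W(C_w)a^{-1}$ escape the stabilizers of the respective initial edges of the connecting path — for instance when the expanding vertex has a central involution, so that $N_V(C_v)=V$, which is the only case in which the paper later uses this clause, in Theorem 5.2.) So your diagnosis of the difficulty is a legitimate criticism of the written proof, but as a proof of the corollary your submission establishes only the tree claim; to finish you must either carry out the normalizer argument above (and deal with the absorption phenomenon you identified, e.g.\ by restricting to the cases actually needed) or actually produce the Turaev-style computation you only allude to.
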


\begin{proof}
Let $T$ be a maximal tree in $\Gamma$.
If there is an edge $e$ not in $T$ there is a cycle $\gamma$ in $\Gamma$ incorporating $e$. 
Each vertex group $G_v$ has an unique conjugacy class of subgroups $C_v$ of order $p$,
since its Sylow subgroups are cyclic or quaternionic.
Therefore $t_eC_{o(e)}t_e^{-1}=wC_{o(e)}w^{-1}$, 
where $w$ is a word in the union of the vertex groups along the rest of the cycle.
The element $t_ew^{-1}$ has infinite order, and so $N_\pi(C_{o(e)})$ is infinite.
This contradicts Crisp's Theorem.

If $G_e<N_V(G_e)$ for some $V=G_v$ with $v\in\{o(e),t(e)\}$
we may assume that $C_v\in{G_e}$. 
Then $N_{G_e}(C_v)<N_V(C_v)$, since $C_v$ is unique up to conjugacy in $G_e$. 
Suppose there are two such vertex groups $V=G_v$ and $W=G_w$ with $v\not=w$, 
and choose a (minimal) path connecting these vertices.
As before $C_w=aC_va^{-1}$ for some $a$ in the subgroup generated
by the intermediate vertex groups along the path.
Thus $C_w$ is normalized by the subgroup generated by $N_W(C_w)$ and $aN_V(C_v)a^{-1}$,
which is infinite.
This again contradicts Crisp's Theorem.
\end{proof}

The fact that the Sylow subgroups of a group $G$ have cohomological period
dividing 4 does not imply that $G$ has cohomological period dividing 4.
Nevertheless, this is true in our situation.

\begin{theorem} 
Let $X$ be an orientable $PD_3$-complex with $\pi=\pi_1(X)\cong\pi\mathcal{G}$,
where $(\mathcal{G},\Gamma)$ is a reduced finite graph of finite groups.
Then the vertex groups have cohomological period dividing $4$.
\end{theorem}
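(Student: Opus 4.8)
The plan is to argue by contradiction: assuming that some vertex group $G_v$ has cohomological period greater than $4$, I would produce a finite-index subgroup of $\pi$ that cannot, after all, support an orientable $PD_3$-structure, contradicting Turaev's Criterion. By Theorem 4.3 every $G_v$ already has periodic cohomology, so the only thing to exclude is period $>4$. The entry point is Lemma 2.2: if $G_v$ has period $>4$ it contains a subgroup $H\cong Z/pZ\rtimes Z/qZ$ with $\zeta H=1$, where $p$ is an odd prime and $q$ is an odd prime or $4$, presented by $\langle a,b\mid a^q=b^p=1,\ aba^{-1}=b^r\rangle$ with $r$ a primitive $q$th root mod $p$. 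The strategy is to show that no orientable $PD_3$-complex group can contain such an $H$.

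First I would pass to a convenient finite cover. Let $F$ be a maximal free normal subgroup of $\pi$; it has finite index and, being torsion-free, meets $H$ trivially, so $\pi'=FH=F\rtimes H$ has finite index in $\pi$ and admits the projection $\rho:\pi'\to\pi'/F\cong H$, which splits the inclusion of $H$. The cover of $X$ corresponding to $\pi'$ is again a finite orientable $PD_3$-complex, so by Turaev's Criterion $[I_{\pi'}]=[J_{\pi'}]$ over $\mathbb{Z}[\pi']$, and $\pi'$ is the fundamental group of a reduced finite graph of finite groups. The point of the reduction is that $\pi'$, unlike $\pi$ itself, surjects onto $H$, so I can feed representations of $H$ into the exclusion machinery described after Turaev's Criterion.

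Next I would apply the ring-homomorphism test. Since $H$ has period $>4$ it acts freely on no homotopy $3$-sphere, hence is the fundamental group of no $PD_3$-complex, so $[I_H]\neq[J_H]$. To make this survive base change I would choose a $\mathbb{Z}$-torsion-free ring $R$ and a homomorphism $f':\mathbb{Z}[H]\to R$ for which the $\mathbb{Z}$-torsion submodules of $R\otimes_{f'}I_H$ and $R\otimes_{f'}J_H$ differ; the natural candidate is the degree-$q$ representation induced from a faithful character of the normal subgroup $Z/pZ$, landing in an order in $M_q(\mathbb{Q}(\zeta_p))$. Composing with the induced map $\mathbb{Z}[\pi']\to\mathbb{Z}[H]$ gives $f:\mathbb{Z}[\pi']\to R$. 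If $[I_{\pi'}]=[J_{\pi'}]$, then applying $R\otimes_f(-)$, which sends finitely generated projectives to $\mathbb{Z}$-torsion-free $R$-modules, forces the $\mathbb{Z}$-torsion of $R\otimes_fI_{\pi'}$ and of $R\otimes_fJ_{\pi'}$ to agree. I would compute these by base-changing first along $\rho$: there is an exact sequence $0\to F^{\mathrm{ab}}\to\mathbb{Z}[H]\otimes_{\mathbb{Z}[\pi']}I_{\pi'}\to I_H\to0$, and since the conjugate-transpose construction commutes with $\rho$ the analogous sequence holds for $J$. Arranging that $F^{\mathrm{ab}}\otimes_{\mathbb{Z}[H]}R$ contributes no $\mathbb{Z}$-torsion reduces the comparison to $R\otimes_{f'}I_H$ versus $R\otimes_{f'}J_H$, whose torsions were chosen to differ, giving the contradiction. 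As every vertex group of period $>4$ contains such an $H$, all vertex groups have period dividing $4$.

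The main obstacle is this last bookkeeping: controlling the contribution of the free kernel $F$, through $F^{\mathrm{ab}}$, so that the mismatch seen over $H$ is not masked after lifting to $\pi'$, and checking that the induced representation genuinely separates $I_H$ from $J_H$. For odd $q$ the separation is automatic, since the contragredient of the induced representation corresponds to $\chi\mapsto\chi^{-1}$ and $-1\notin\langle r\rangle$ when $q$ is odd, so the relevant module is not self-dual. When $q=4$ one has $r^2\equiv-1$, the induced representation is self-dual, and this particular $R$ fails; there I would instead exploit the order-$2$ element $a^2$ together with Lemma 2.1 (which forces $G_v$ to be metacyclic once it carries a non-central involution) and dispose of the residual metacyclic case by a separate but analogous torsion computation.
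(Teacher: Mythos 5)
Your opening moves coincide with the paper's: Lemma 2.2 supplies the subgroup $H\cong Z/pZ\rtimes Z/qZ$ with trivial centre, you pass to the finite-index subgroup $\pi'=FH\cong F\rtimes H$, and you aim to contradict Turaev's criterion by finding a $\mathbb{Z}$-torsion-free ring $R$ and a homomorphism $\mathbb{Z}[\pi']\to R$ under which the $\mathbb{Z}$-torsion submodules of $R\otimes I_{\pi'}$ and $R\otimes J_{\pi'}$ disagree. The divergence --- and the gap --- is in how you propose to compute these modules. You want to factor through the retraction onto $H$ and reduce, via the sequence $0\to F^{\mathrm{ab}}\to\mathbb{Z}[H]\otimes_{\mathbb{Z}[\pi']}I_{\pi'}\to I_H\to0$, to a comparison of $R\otimes I_H$ with $R\otimes J_H$. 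This reduction does not go through as stated: $F^{\mathrm{ab}}$ is a specific $\mathbb{Z}[H]$-lattice determined by the (at this stage unknown) graph-of-groups structure, not something you can ``arrange'', and $F^{\mathrm{ab}}\otimes_{\mathbb{Z}[H]}R$ will in general acquire $\mathbb{Z}$-torsion (already $\mathbb{Z}\otimes_{\mathbb{Z}[Z/2Z]}\mathbb{Z}^{-}\cong Z/2Z$); there is moreover a $Tor_1$ term when you base-change the sequence along $\mathbb{Z}[H]\to R$, and even when the subobject stays torsion-free the torsion of the middle term is only a submodule of the torsion of the quotient, so a mismatch visible over $H$ can be masked over $\pi'$. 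Finally, the compatibility of the conjugate-transpose construction defining $J$ with this sequence is exactly the kind of claim that requires a presentation-level argument, which you have not supplied. You flagged this bookkeeping as the main obstacle; it is, and it is not overcome.

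The paper closes precisely this hole by pinning down the graph of groups of $FH$ \emph{before} doing any module computation: using Theorem 4.3, Corollary 4.5 and Crisp's Theorem it arranges that all edge groups are $Z/qZ$ (of order $2$ or $4$ when $q=4$), that $\Gamma$ is a tree, and that every vertex group maps isomorphically onto $H$, so that $\pi'$ has the explicit presentation $\langle a,b_1,\dots,b_n\mid a^q=b_i^p=1,\ ab_ia^{-1}=b_i^r\rangle$. It then uses the much cruder quotient $R=\mathbb{Z}[Z/qZ]$ obtained by killing the $b_i$ (rather than a faithful induced representation of $Z/pZ$), and reads off from the Fox--Lyndon matrices that $a$ acts by $r$ on the $p$-torsion of $R\otimes I_{\pi'}$ but by $r^{-1}$ on a nonzero $p$-torsion element of $R\otimes J_{\pi'}$. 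Since $r$ is a primitive $q$th root mod $p$, the inequality $r\not\equiv r^{-1}$ holds uniformly, including for $q=4$ where $r^{-1}=-r$; no separate appeal to Lemma 2.1 is needed there. If you want to salvage your outline, the essential missing ingredient is this structural normalization of the graph of groups, which is what makes $F^{\mathrm{ab}}$ (equivalently, a presentation of $\pi'$) explicit enough to compute with.
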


\begin{proof}
Let $F$ be a free normal subgroup of finite index in $\pi$.
Suppose there is a vertex group with cohomological period greater than 4. 
Then it has a subgroup $H\cong{Z/pZ}\rtimes{Z/qZ}$ with a presentation 
\[\langle a,b\mid ~a^q=b^p=1,~aba^{-1}=b^r\rangle,\]
where $p$ is an odd prime, $q$ is an odd prime or 4 and 
$r$ is a primitive $q$th root {\it mod\/} $p$.
Let $f:\pi\to\pi/F$ be the canonical projection, and let $FH=f^{-1}f(H)$.
Then $FH\cong F\rtimes{H}$ is the group of an orientable $PD_3$-complex. 
Since every finite subgroup of a free product is conjugate to a subgroup 
of one of the factors we may assume that $\pi=FH$ and is indecomposable.

Assume first that $q$ is an odd prime.
Since $\pi$ is indecomposable and all centralizers of non-identity elements 
are finite we may assume that all edge groups have order $q$.
Since the Sylow $q$-subgroups in each vertex group are all conjugate, 
we may assume also that $\Gamma$ is a tree, by Corollary 4.5,
and that $f$ maps each vertex group isomorphically onto $H$.
It follows that $\pi$ has a presentation 
\[\langle a,b_1,\dots,b_n\mid~a^q=b_i^p=1,~ab_ia^{-1}=b_i^r\rangle.\]
Let $f:\mathbb{Z}[\pi]\to{R}=\mathbb{Z}[Z/qZ]$ be the epimorphism
with kernel the two-sided ideal generated by $\{b_1-1,\dots,b_n-1\}$.
Then $R\otimes_fI_\pi\cong{I_{Z/qZ}}\oplus(R/(p,a-r))^n$.
Hence the $\mathbb{Z}$-torsion of $R\otimes_fI_\pi$ is $(Z/pZ)^n$, with
$a$ acting as multiplication by $r$.

However $R\otimes_fJ_\pi\cong{I_{Z/qZ}}\oplus{N^n}$, 
where $N\cong{R^2/R(p,a^{-1}-r)}$.
Let $\rho=\Sigma_{i<q}a^ir^i$ in $R$. Then 
\[(a^{-1}-r)\rho=a^{-1}(1-a^qr^q)=a^{-1}(1-r^q)\equiv0~{\mathit mod}~p.\]
Therefore $(a^{-1}-r)\rho=p\sigma$ for some $\sigma\in{R}$.
Let $[\rho,\sigma]$ be the image of $(\rho,\sigma)$ in $N$.
Then $[\rho,\sigma]\not=0$, since $p$ does not divide $\rho$ in $R$.
On the other hand $p[\rho,\sigma]=\rho[p,a^{-1}-r]=0$ and
$(a^{-1}-r)[\rho,\sigma]=\sigma[p,a^{-1}-r]=0$.
Thus $a$ acts as multiplication by $r^{-1}$ on 
this nontrivial $p$-torsion element of $N$.
Since $r^{-1}\not\equiv{r}~{\mathit mod}~p$ 
it follows that $R\otimes_fI_\pi$ and $R\otimes_fJ_\pi$ 
are not stably isomorphic, and so $[I_\pi]\not=[J_\pi]$.

If $q=4$ the edge groups have order 2 or 4,
and at least one vertex group has an element of order 4.
We may again assume that $\Gamma$ is a tree, and $\pi$ now has a presentation of the form
\[\langle a,b_1,\dots,b_n\mid~a^4=b_i^p=1,~ab_ia^{-1}=b_i^r~\forall{i}\leq{k},~a^2b_ia^2=b_i^{-1}~\forall{i}>k\rangle,\]
for some $k>1$.
We now find that $a$ acts as multiplication by $r$ on a summand $(Z/pZ)^k$ of 
the $\mathbb{Z}$-torsion of $R\otimes_fI_\pi$,
whereas it acts by $r^{-1}=-r$ on part of the corresponding summand 
of the $\mathbb{Z}$-torsion of $R\otimes_fJ_\pi$.
Therefore we again find that $[I_\pi]\not=[J_\pi]$.

Thus $\pi$ does not satisfy Turaev's criterion.
Hence all vertex groups must have cohomological period dividing $4$.
\end{proof}

It is of course clear that we cannot have $\pi\cong{H}$,
since $H$ has cohomological period $>4$.

\section{the main result}

We shall now use the classification of groups of cohomological period 4 to 
restrict further the possible fundamental groups.

\begin{lemma}
Let $G$ be a finite group with cohomological period $4$, 
and let $C$ be a cyclic subgroup of odd prime order $p$. 
Then $N_G(C)$ is nonabelian unless $p=3$ and $G=B\times{Z/dZ}$ 
with $B=T_1^*$ or $I^*$.
\end{lemma}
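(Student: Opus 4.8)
The plan is to peel off the central cyclic factor and then run down the classification of period-$4$ groups recalled above. Write $G=B\times{Z/dZ}$ with $(d,|B|)=1$ and $B$ one of the listed blocks (metacyclic with $n=2^e$ and $r=-1$, generalized quaternionic, or extended binary polyhedral). Since $p$ is prime it divides exactly one of $d$ and $|B|$. If $p\mid{d}$ then every element of order $p$ lies in the central factor $1\times{Z/dZ}$, so $C$ is central, $N_G(C)=G$, and we are done because $G$, being of period $4$, is noncyclic and hence (having periodic cohomology) nonabelian. If $p\mid|B|$ then $C$ is conjugate into $B\times1$ and, as $Z/dZ$ is central, $N_G(C)=N_B(C)\times{Z/dZ}$; this is abelian exactly when $N_B(C)$ is. So everything reduces to computing $N_B(C)$ for $C$ of odd prime order $p$ inside a single block $B$.

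First I would treat the blocks in which $C$ is forced to be normal. If $B$ is metacyclic or generalized quaternionic, its odd-order elements form a normal cyclic subgroup (the odd Hall subgroup), in which $C$ is the characteristic subgroup of order $p$; hence $C\trianglelefteq{B}$ and $N_B(C)=B$ is nonabelian. The same holds for $B=T_k^*$ or $O_k^*$ with $k\geq2$: the only odd prime is $3$, the Sylow $3$-subgroup is cyclic of order $3^k$, and its subgroup of order $3$ is a power of $z^3$, where $z$ generates the Sylow $3$-subgroup and acts on the quaternionic part through an order-$3$ automorphism; thus this subgroup acts trivially and is central, so once more $C\trianglelefteq{B}$ and $N_B(C)=B$.

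This leaves the base cases $B=T_1^*$, $O_1^*$ and $I^*$, where $C$ need not be normal. Here I would quotient by the unique central involution $\tau$ and use that, since $C\cap\langle\tau\rangle=1$, the subgroup $N_B(C)$ is the full preimage of $N_{\bar{B}}(\bar{C})$ in $\bar{B}=B/\langle\tau\rangle$, which is $A_4$, $S_4$ and $A_5$ respectively. Because $\tau$ is central, $C_B(C)\supseteq\langle{C},\tau\rangle\cong{Z/2pZ}$, so $N_B(C)$ can fail to be nonabelian only when $N_{\bar{B}}(\bar{C})$ is itself abelian. In $A_4$ the Sylow $3$-subgroup is self-normalizing, and its preimage in $T_1^*$ is the abelian group $Z/6Z$; this is the genuinely exceptional case. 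Examining $O_1^*$ (where $N_{S_4}(Z/3Z)\cong{S_3}$) and $I^*$ the same way then yields exactly the exceptions recorded in the statement.

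The step I expect to be the main obstacle is precisely this last, case-by-case analysis of the three polyhedral quotients, where each admissible prime must be examined individually and one must keep careful track of whether an inverting, Weyl-type element appears in $N_{\bar{B}}(\bar{C})$; the earlier reductions are routine by comparison.
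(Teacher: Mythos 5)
Your proof follows the same route as the paper's, whose entire argument is to ``examine the list'' of period-$4$ groups: the reductions you make (splitting off the coprime central factor, disposing of the metacyclic, generalized quaternionic and higher $T^*_k$, $O^*_k$ blocks via normality of $C$, and passing to $A_4$, $S_4$, $A_5$ for the three base cases) are all sound and correctly organize that examination. The one substantive problem sits exactly in the step you flag as the crux: the deferred check of $I^*$ does not come out as you predict. For $B=I^*=SL(2,5)$ and $p=3$ one has $N_{A_5}(\bar{C})\cong S_3$, so $N_B(C)$ is its full preimage, a group of order $12$ with a unique (central) involution and a nonabelian quotient --- the dicyclic group of order $12$ --- hence nonabelian; similarly $N_{A_5}(\bar{C})\cong D_{10}$ for $p=5$. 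So your case-check actually yields $B=T^*_1$ as the only genuine exception, not ``exactly the exceptions recorded in the statement.'' This is harmless for the lemma, which is an ``unless'' statement, so proving nonabelianness in a nominally excepted case only gives something stronger (and the paper's own parenthetical is consistent with this: it lists $p=5$, $G=I^*$ and $p=3$, $G=O^*_1$ among the nonabelian cases and nowhere claims $p=3$, $G=I^*$ is abelian). But as written your final sentence asserts a false outcome for a computation you do not display, and a reader executing your plan would be puzzled that $I^*$ appears in the statement at all.

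A second, minor inaccuracy: for $O^*_k$ with $k\geq2$ what you need --- and what the paper asserts --- is that the subgroup of order $3$ is \emph{normal}. It is characteristic in the normal subgroup $T^*_k$ (being central there), which gives normality in $O^*_k$; but it need not be central in $O^*_k$, since the outer generator may invert the Sylow $3$-generator. Your argument only uses normality, so this is cosmetic, but the claim of centrality should be dropped or justified.
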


\begin{proof}
This follows on examining the list of such groups $G$.
(Note that if $p>5$ then $C$ is central, 
while if $p=5$ and $G=I^*$ or $p=3$ and $G=O^*_1$ then $N_G(C)$ is nonabelian.
If $p=3$ and $G=T^*_k$ or $O^*_k$ with $k>1$ then $C$ is normal in $G$.)
\end{proof}

Let $K_{1,n}$ be the bipartite graph which is topologically a cone 
on $n$ points, with one central vertex $c$, $n$ root vertices
$\{v(i)\mid1\leq{i}\leq{n}\}$
and $n$ oriented edges $\{e(i)=(c,v(i))\mid1\leq{i}\leq{n}\}$.
If $d\geq1$ we may label the root vertices and edges of $K_{1,dn}$ 
as $\{v(i,j)\mid 1\leq{i}\leq{n},1\leq{j}\leq{d}\}$
and $\{e(i,j)\mid 1\leq{i}\leq{n},1\leq{j}\leq{d}\}$,
respectively.

Let $\mathcal{G}$ be a graph of groups with underlying graph $K_{1,n}$,
and such that the vertex group $G_c$ has a subgroup $H$ of finite index $d=[G_c:H]>1$ which contains all the edge groups $G_{e(i)}$.
Then $\pi\mathcal{G}$ has a subgroup $\pi\widetilde{\mathcal{G}}$
of index $d$ where $\widetilde{\mathcal{G}}$
is the graph of groups with underlying graph $K_{1,dn}$
and groups $G_c=H$, $G_{v(i,j)}\cong{G_{v_i}}$ 
and $G_{e(i,j)}\cong{G_{e(i)}}$ for all $(i,j)$.

\begin{theorem}
Let $X$ be an indecomposable orientable $PD_3$-complex with $\pi=\pi_1(X)\cong\pi\mathcal{G}$,
where $(\mathcal{G},\Gamma)$ is a reduced finite graph of finite groups
of cohomological period $4$.
Then $\Gamma$ is a tree, all edge groups are $Z/2Z$,
and at most one vertex group is not dihedral.
\end{theorem}

\begin{proof}
Let $G_e$ be an edge group.
Then $G_e$ is metacyclic, by Theorem 4.3.
If $G_e$ has a central involution then it is also central in
$V=G_{o(e)}$ and $W=G_{t(e)}$, by Lemma 2.1.
This contradicts Crisp's Theorem, and so 4 cannot divide $|G_e|$.

At least one of $V,W$ is metacyclic, by Corollary 4.4.
Suppose that both are metacyclic.
If $C\leq{G_e}$ has odd prime order then $N_V(C)=V$ and $N_W(C)=W$, 
since $V$ and $W$ are metacyclic with cohomological period dividing 4.
As this contradicts Crisp's Theorem $G_e=Z/2Z$.

If $V$ is not metacyclic then it has a central involution, $g$ say, and
$W\cong{D_{2m}}\times{Z/dZ}$ for some relatively prime odd $m\geq3$ and $d\geq1$.
Therefore if $C\leq{G_e}$ has odd prime order $N_W(C)=W$. Hence
$N_V(C)\leq {G_e}$ and so the central involution is in $G_e$.
Moreover, $C_W(g)=G_e$ and so $G_e\cong{Z/2dZ}$.
Since the odd-order subgroup of $G_e$ is central in $W$ its normalizer
in $V$ must be abelian unless $d=3$ or 1, by Lemma 5.3.

Since the edge groups all have even order and groups of cohomological period 4 
and order divisible by 4 have central involutions there is at most one 
such vertex group and $\Gamma$ is a tree, by Corollary 4.5.

If there is an edge $e$ with $|G_e|>2$ then $G_e\cong{Z/6Z}$, the adjacent vertex groups are
${D_{2m}}\times{Z/3Z}$ and $B\times{Z/dZ}$, with $(m,6)=1$, $B=T^*_1$ or $I^*$ and $(d,|B|)=1$,
and the remaining vertex groups are dihedral.
Since $T^*$ and $I^*$ each have a unique element of order 2,
the images of the edge groups in $B\times{Z/dZ}$ all
lie in the same cyclic subgroup of order 6.
Hence $\pi$ has a subgroup of index $|B|/6$ 
with an induced graph-of groups structure with one vertex group $Z/6dZ$, 
$|B|/6$ vertex groups isomorphic to ${D_{2m}}\times{Z/3Z}$.
$|B|/6$ edge groups of order 6,
and all other edge groups of order 2.
(See the two paragraphs preceding this theorem.)
This subgroup is the group of an orientable $PD_3$-complex,
since it has finite index in $\pi$,
and is indecomposable, by Lemma 2.4.
Since $|B|/6\geq4$ this is not consistent with the earlier part of this theorem.
Therefore all edge groups must have order 2.
\end{proof}

Since all involutions in $\pi$ are conjugate we may modify the underlying
graph of groups so that $\Gamma$ is linear: all vertices have valence $\leq2$.

\begin{corollary} 
If all the vertex groups are dihedral then $\pi\cong\pi'\rtimes{Z/2Z}$
and $\pi'$ is a free product of cyclic groups of odd order. 
\qed   
\end{corollary}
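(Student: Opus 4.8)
The plan is to produce the splitting explicitly, by constructing an epimorphism $\phi\colon\pi\to Z/2Z$ whose kernel is the asserted free product and which admits an evident section. By Theorem 5.2, in the present case all edge groups are $Z/2Z$ and $\Gamma$ is a tree; by the remark immediately preceding the statement we may take $\Gamma$ linear, with vertex groups $D_{2m_1},\dots,D_{2m_k}$ for odd $m_i\geq3$ and all edge groups $Z/2Z$.

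First I would define $\phi$. Each dihedral vertex group $D_{2m_i}$ carries its standard surjection onto $Z/2Z$, sending the rotation subgroup $Z/m_iZ$ to $0$ and every reflection to $1$, with kernel $Z/m_iZ$. Since $m_i$ is odd and $\geq3$ the centre of $D_{2m_i}$ is trivial, so its only involutions are the reflections; in particular each edge group $G_e\leq D_{2m_i}$ is generated by a reflection and hence maps to $1$. Thus the vertex surjections agree on every edge group, and since $\Gamma$ is a tree (so all stable letters are trivial) they assemble into a homomorphism $\phi\colon\pi\to Z/2Z$, which is onto.

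Next I would identify $\pi'=\ker\phi$ by Bass--Serre theory. Let $T$ be the tree of the graph of groups, on which $\pi$ acts with quotient $\Gamma$, and restrict the action to $\pi'$. For every vertex $v$ and every edge $e$ the stabilizer maps onto $Z/2Z$ under $\phi$ (a reflection hits $1$), so $\pi=\pi'G_v=\pi'G_e$ and hence each $\pi$-orbit of vertices and of edges consists of a single $\pi'$-orbit. Therefore $\Gamma'=\pi'\backslash T$ again has $k$ vertices and $k-1$ edges; being connected, it is a tree. Its vertex groups are $G_v\cap\pi'=Z/m_iZ$ and its edge groups are $G_e\cap\pi'=1$. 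A tree of groups with trivial edge groups has fundamental group the free product of its vertex groups (apply Lemma 2.3 along the tree), so $\pi'\cong Z/m_1Z*\cdots*Z/m_kZ$, a free product of cyclic groups of odd order. Finally, any reflection $t$ in a vertex group satisfies $t^2=1$ and $\phi(t)=1$, so $\langle t\rangle\cong Z/2Z$ splits $\phi$ and $\pi\cong\pi'\rtimes\langle t\rangle\cong\pi'\rtimes Z/2Z$.

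The only substantive step is the Bass--Serre computation of $\pi'$: one must verify that no vertex or edge orbit splits under $\pi'$ and then count cells to see that $\Gamma'$ is again a tree, so that the vanishing of the edge groups forces an honest free product rather than a more complicated graph-of-groups decomposition. Everything else is routine bookkeeping with the dihedral surjections.
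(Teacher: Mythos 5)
Your proposal is correct and fleshes out, via the $\pi$-action on the Bass--Serre tree, exactly the argument the paper treats as immediate from Theorem 5.2: $\pi$ is an iterated free product of dihedral groups amalgamated over copies of $Z/2Z$ generated by reflections, the common quotient $Z/2Z$ splits off, and the kernel is the free product of the odd-order rotation subgroups. The only point worth adding is that your $\ker\phi$ really is the commutator subgroup $\pi'$ in the paper's notation: your computation exhibits $\ker\phi$ as generated by conjugates of the subgroups $Z/m_iZ=D_{2m_i}'\leq\pi'$, and $\pi/\ker\phi$ is abelian, so the two subgroups coincide.
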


Theorem 5.2 and Milnor's theorem on involutions in finite groups 
acting freely on mod-$(2)$ homology spheres together imply 
(without using the Sphere Theorem) that
if $M$ is a closed 3-manifold and $\pi=\pi_1(M)$ is 
freely indecomposable then $\pi$ is finite,
$Z$ or $Z\oplus{Z/2Z}$ or is a $PD_3$-group.
For otherwise $\pi$ would have a finite index subgroup  
$\nu\cong(*_{i\leq{r}}Z/m_iZ)\rtimes{Z/2Z}$, with $m_i$ odd for $i\leq{r}$,
by Theorem 5.2.
Such a group $\nu$ maps onto $D_{2m_1}$ with kernel $\kappa$
a free product of finite cyclic groups of odd order.
Thus $D_{2m_1}$ would act freely on the covering space $M_\kappa$ 
associated to $\kappa$,
which is a mod-$(2)$ homology 3-sphere.
This is impossible, by Milnor's theorem \cite{mil}.

\section{construction}

The Fox-Lyndon presentation matrix for the augmentation ideal of $D_{2m}$
derived from the presentation in \S2 is
$\left(\begin{smallmatrix} a+1 & 0\\
1+ab^s & a\nu_s-\nu_{s+1}
\end{smallmatrix}\right),$
where $\nu_k=1+b+\dots+b^{k-1}$.
The off-diagonal element may be removed by right multiplication by 
$\left(\begin{smallmatrix} 1 & 0\\
1+ab^s & 1
\end{smallmatrix}\right),$
since $(1+ab^s)+(a\nu_s-\nu_{s+1})(1+ab^s)=0$.
On multiplying the second column by $b^{s^2}$ the entries become self-conjugate.

Let $\{G_i\mid 0\leq{i}\leq{n}\}$ be a family of finite groups, 
with $G_0$ having even order and cohomological period $2$ or $4$, 
and $G_i=D_{2m_i}$ being dihedral, with $m_i=2s_i+1$, for $i\geq1$.
Each of these groups has an unique conjugacy class of involutions, 
and so there is a well-defined iterated generalized free product with amalgamation
\[\pi=G_0*_{Z/2Z}G_1*_{Z/2Z}\dots*_{Z/2Z}G_n.\]
We may choose a presentation for $G_0$ with $g$ generators and $g$ relators,
in which the last generator, $a$ say, is an involution.
Taking 2-generator presentations for the dihedral groups, as above, and identifying the involutions,
we obtain a presentation for $\pi$ of the form
\[
\langle G_0, b_1,\dots,b_n\mid~ab_1^{s_1}ab_1^{-1-s_1}=\dots=ab_1^{s_n}ab_1^{-1-s_n}=1
\rangle.
\]
(In particular, such a group has a balanced presentation, with equally many generators and relations.)
The Fox-Lyndon presentation matrix for $I_\pi$ derived from this begins with  a $g\times{g}$
block corresponding to the presentation matrix for $I_{G_0}$ and $n$ new rows and columns.
The elements in the $g$th column and final $n$ rows may be removed and 
the diagonal elements rendered self-conjugate, as before, as the new generators interact only with $a$.
(Note that if $e_1,\dots,e_{g+n}$ are the generators for $I_\pi$ associated to this presentation then
$(a+1)e_g=0$ is a consequence of the first $g$ relations.)

It is now clear that $[I_\pi]=[J_\pi]$, and so $\pi$ is the fundamental group of a $PD_3$-complex.
If $I_{G_0}$ has a square presentation matrix which is conjugate to its transpose 
the argument of \cite{hi04} extends to give an explicit complex with one 0-cell, 
$g+n$ 1-cells, $g+n$ 2-cells and one 3-cell realizing this group.
That this complex is a $PD_3$-complex follows from Theorem 3.1.

The first such group considered in this context was $S_3*_{Z/2Z}S_3$
\cite{hi93,hi04,hi05}, 
but the simplest such example is perhaps $S_3*_{Z/2Z}Z/4Z$, 
with presentation \[\langle a,b\mid~a^4=1,~a^2ba^2=b^2\rangle.\]
This group is realized by a $PD_3$-complex with just six cells.
(In \cite{hi05} we erroneously dismissed this as a possibility.)

\section{indecomposable nonorientable $PD_3$-complexes}

Here we shall show that the only indecomposable nonorientable $PD_3$-complexes 
with virtually free fundamental group are the two $3$-manifolds
${S^1}\widetilde\times{S^2}$ and ${S^1}\times{RP^2}$.

\begin{theorem} 
Let $X$ be an indecomposable nonorientable $PD_3$-complex 
with $\pi=\pi_1(X)\cong\pi\mathcal{G}$,
where $(\mathcal{G},\Gamma)$ is a finite graph of finite groups.
If all the vertex groups are orientation preserving 
then $X\simeq{S^1}\widetilde\times{S^2}$.
\end{theorem}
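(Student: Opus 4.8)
The plan is to show that $\pi\cong Z$, after which the homotopy type of $X$ is forced. Since every vertex group lies in $\ker w$, the orientation character factors through the quotient of $\pi$ by the normal closure of the vertex groups, which is the free group $\pi_1(\Gamma)$. As $X$ is nonorientable, $w\not=1$, so $\pi_1(\Gamma)\not=1$ and $\Gamma$ contains a cycle; fix an edge $e$ not lying in a maximal tree $T$ with $w(t_e)=-1$. Because $X$ is indecomposable, $\pi$ is freely indecomposable (a $PD_3$-complex whose group is a nontrivial free product is a connected sum). Hence if some edge group is trivial then, by Lemma 2.3, $\pi\cong Z$, which is the desired conclusion. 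So the whole problem reduces to excluding the possibility that every edge group is nontrivial.

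To carry out this exclusion I would first arrange that the vertex groups have periodic cohomology, so that the conjugacy arguments of Section 4 become available. The orientation double cover $X^+$, with $\pi^+=\ker w$ of index $2$, is an orientable $PD_3$-complex whose group is again virtually free, and each vertex group $G_v$ is a finite subgroup of $\pi^+$. By Corollary IV.1.9 of \cite{dd} and Theorem 4.3 applied to $X^+$, every finite subgroup of $\pi^+$ lies in a vertex group of periodic cohomology, so each $G_v$ has periodic cohomology; in particular its Sylow subgroups are cyclic or quaternionic, and it has a unique conjugacy class of subgroups of each prime order. Now choose a prime $p$ dividing $|G_e|$ and an element $g\in G_e$ of order $p$, with $C=\langle g\rangle$. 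Running the conjugacy argument of Corollary 4.5 around the cycle through $e$, I would produce an orientation-preserving element $u$ with $uCu^{-1}=t_eCt_e^{-1}$, so that $t_e^{-1}u$ normalizes $C$, is orientation-reversing, and has infinite order. Then $N_\pi(C)$, and hence $C_\pi(g)$, is infinite. Crisp's Theorem forces $p=2$ and $g$ orientation-reversing, contradicting the fact that $g$ lies in an orientation-preserving vertex group. This contradiction rules out nontrivial edge groups.

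The step I expect to be the main obstacle is the production of the orientation-preserving conjugator $u$ when the cycle through $e$ is not a single loop: if an intermediate vertex or edge group along the cycle has order prime to $p$, there is no order-$p$ subgroup to \emph{carry} $C$ past, and the uniqueness-of-conjugacy-class mechanism of Corollary 4.5 does not apply verbatim. When $e$ is a loop, with $o(e)=t(e)=v$, the argument is immediate, since $C$ and $t_eCt_e^{-1}$ are then both order-$p$ subgroups of the single group $G_v$ and are therefore conjugate within $G_v$ by an orientation-preserving element. I would try to reduce the general case to this one by collapsing the tree path of the cycle (equivalently, by replacing the graph of groups by one in which the chosen cycle is a loop), or, failing that, by analysing directly the fixed-point subtree of $C$ in the Bass--Serre tree and the induced action of $N_\pi(C)$ on it.

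Finally, having forced $\pi\cong Z$ with $w$ nontrivial, I would invoke the classification of $PD_3$-complexes with fundamental group $Z$: up to homotopy these are exactly the two $S^2$-bundles over $S^1$, distinguished by their orientation character. Since $X$ is nonorientable it must be the twisted bundle, that is, $X\simeq{S^1}\widetilde\times{S^2}$.
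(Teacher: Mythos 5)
Your overall architecture coincides with the paper's: use the fact that $w$ kills every vertex group to deduce that $w$ factors through the free group $\pi_1(\Gamma)$, so $\Gamma$ cannot be a tree; show that any nontrivial torsion would force $\Gamma$ to be a tree by running the conjugacy--around--a--cycle argument of Corollary 4.5 and invoking Crisp's Theorem against an orientation-preserving element of prime order with infinite normalizer; conclude that $\pi$ is free, hence $Z$, and quote Wall for the homotopy type. The reduction via Lemma 2.3 and the use of the orientation double cover to give the vertex groups periodic cohomology are both sound. However, the step you yourself flag as the ``main obstacle'' is a genuine gap, and it is exactly the point at which the paper's proof supplies an input you are missing. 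You choose a prime $p$ dividing the order of the single edge group $G_e$ on the non-tree edge and then need to transport an order-$p$ subgroup around the whole cycle; this requires $p$ to divide the order of \emph{every} edge group along the cycle, which you never establish. Neither of your proposed repairs works as stated: collapsing the tree path of the cycle amalgamates the intermediate vertex groups into a single \emph{infinite} vertex group, so you no longer have a graph of finite groups and the ``unique conjugacy class of order-$p$ subgroups'' mechanism (which rests on Sylow theory in a finite group with periodic cohomology) is lost; and the appeal to the fixed subtree of $C$ in the Bass--Serre tree is a restatement of what must be proved rather than an argument.

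The paper closes precisely this hole by first showing that if any vertex group is nontrivial then \emph{every} edge group contains an (orientation-preserving) involution, so that $p=2$ is a common prime factor of all the edge group orders and the argument of Corollary 4.5 applies verbatim to force $\Gamma$ to be a tree. This is obtained by rerunning the analysis of Theorems 4.3, 4.6 and 5.2 on the orientation-preserving vertex groups: since all the relevant involutions and odd-order subgroups lie in $\ker w$, Crisp's Theorem still forbids infinite normalizers, so the edge groups are metacyclic, the vertex groups have cohomological period dividing $4$, and the case analysis of Theorem 5.2 forces each edge group to have even order. If you insert this step (or any other argument producing a prime dividing all edge group orders around your chosen cycle), your proof goes through and is essentially the paper's.
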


\begin{proof}
Since $X$ is nonorientable $\pi$ is infinite, 
and is not generated by the vertex groups.
Thus $\Gamma$ is not a tree.
If there were a nontrivial vertex group it would have finite cohomological 
period, and all edge groups would have (orientation preserving) involutions.
But all involutions are conjugate, so $\Gamma$ would be a tree, 
by the argument of Corollary 4.5.
Thus $\pi$ must be a free group.
Since it is infinite and indecomposable it must be $Z$.
The result now follows from \cite{wall}.
\end{proof}

\begin{lemma} 
Let $\pi$ be a finitely presentable group and let 
$f:\mathbb{Z}[\pi]\to{R}=\mathbb{Z}[Z/2Z]=\mathbb{Z}[a]/(a^2-1)$
be the epimorphism induced by an epimorphism $w:\pi\to{Z/2Z}$.
Suppose that $R\otimes_fI_\pi\cong{R/(a+1)}\oplus{T}$,
where $T$ is a $\mathbb{Z}$-torsion module.
Then $[I_\pi]\not=[J_\pi]$.
\end{lemma}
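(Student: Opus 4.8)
The plan is to compute the $\mathbb Z$-torsion submodules of $R\otimes_f I_\pi$ and $R\otimes_f J_\pi$ directly and show they differ as $R$-modules, which by Turaev's Criterion forces $[I_\pi]\neq[J_\pi]$. The hypothesis already hands us the $I_\pi$ side: $R\otimes_f I_\pi\cong R/(a+1)\oplus T$, so its $\mathbb Z$-torsion submodule is exactly $T$ (note $R/(a+1)\cong\mathbb Z$ is torsion-free, since $R=\mathbb Z[a]/(a^2-1)$ and quotienting by $(a+1)$ identifies $a$ with $-1$, leaving $\mathbb Z$). So everything reduces to understanding the $\mathbb Z$-torsion of $R\otimes_f J_\pi$.

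\textbf{Relating $J_\pi$ to $I_\pi$ over $R$.} The key is that $J_\pi$ is defined from a presentation matrix $\overline A^{tr}$ where $A$ presents $I_\pi$. First I would observe that applying $f$ to the conjugate transpose matrix commutes appropriately with the anti-involution on $R$: the anti-involution $\bar g=w(g)g^{-1}$ on $\mathbb Z[\pi]$ descends, and on $R=\mathbb Z[Z/2Z]$ the generator $a$ satisfies $\bar a=w(a)a^{-1}=-a$ (since $w(a)=-1$ and $a^{-1}=a$). Thus the induced anti-involution on $R$ sends $a\mapsto -a$, i.e.\ it sends $a+1\mapsto 1-a=-(a+1)+2$. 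The plan is to track what this anti-involution does to the presentation: $R\otimes_f J_\pi$ is presented by $\overline{f(A)}^{tr}$, where the bar is now the anti-involution on $R$. Because $R$ is commutative, transposition and the bar together amount to applying the ring automorphism/anti-automorphism $a\mapsto -a$ to the entries of a matrix equivalent to the one presenting $R\otimes_f I_\pi$.

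\textbf{Extracting the torsion and finishing.} Writing $\tau:R\to R$ for the involution $a\mapsto -a$, I expect $R\otimes_f J_\pi\cong \tau_*(R\otimes_f I_\pi)$, the module obtained by twisting the $R$-action through $\tau$; equivalently its torsion submodule is $\tau_*T$, where $a$ now acts on $T$ via $-a$. So the comparison becomes: is $T\cong \tau_* T$ as $R$-modules? These need not be isomorphic, and the whole point is that for the groups we care about they are not. The cleanest route is to detect the discrepancy on the free part: $R\otimes_f I_\pi$ has torsion-free quotient $R/(a+1)\cong\mathbb Z$ on which $a$ acts by $-1$, whereas the corresponding free quotient of $R\otimes_f J_\pi=\tau_*(R\otimes_f I_\pi)$ is $\tau_*(R/(a+1))\cong R/(a-1)\cong\mathbb Z$ on which $a$ acts by $+1$. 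A finitely generated projective $R$-module is stably determined by its behaviour under both idempotents $\tfrac{1\pm a}{2}$ (after inverting $2$, or working rationally), and adding projectives $R^k$ cannot convert an $a=-1$ eigenline into an $a=+1$ eigenline. Hence $R\otimes_f I_\pi$ and $R\otimes_f J_\pi$ are not stably isomorphic, so $[I_\pi]\neq[J_\pi]$.

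\textbf{Main obstacle.} The delicate point is making precise the claim $R\otimes_f J_\pi\cong\tau_*(R\otimes_f I_\pi)$, i.e.\ correctly bookkeeping how the conjugate-transpose operation defining $J_\pi$ interacts with the base change $f$ and the descended anti-involution on the commutative ring $R$. I expect this to require care because $J_\pi$ is only defined up to stable isomorphism, so I must argue that the twisting statement holds at the level of stable classes (which is exactly what is needed for Turaev's Criterion) rather than on the nose. Once that functorial compatibility is pinned down, the eigenvalue obstruction on the torsion-free quotient is routine and gives the non-isomorphism cleanly.
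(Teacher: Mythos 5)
Your detection mechanism (a $\mathbb{Z}=R/(a-1)$ summand versus a $\mathbb{Z}^w=R/(a+1)$ summand in the torsion-free part, coming from $\overline{a+1}=1-a$) is exactly the invariant the paper uses, but the step you rely on to compute the $J_\pi$ side is false. The claim $R\otimes_f J_\pi\cong\tau_*(R\otimes_f I_\pi)$ does not hold, even up to stable isomorphism: $J_\pi$ is presented by the \emph{conjugate transpose} $\overline{A}^{tr}$ of a presentation matrix $A$ for $I_\pi$, and transposition is not harmless over a commutative ring --- it interchanges the roles of generators and relations, so $\mathrm{coker}(\overline{A}^{tr})$ is in general a quite different module from a twist of $\mathrm{coker}(A)$. (Already over $\mathbb{Z}$, the $1\times2$ matrix $(2\ \ 3)$ presents $\mathbb{Z}$ on two generators, while its transpose presents $\mathbb{Z}/(2,3)=0$.) The paper's own Theorem 4.6 exhibits the phenomenon in this exact setting: there $R\otimes_fI_\pi$ has finite summands $R/(p,a-r)$ while $R\otimes_fJ_\pi$ has summands $N=R^2/R(p,a^{-1}-r)$, which have a nontrivial rational part and are not twists of the former. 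So the ``delicate point'' you flag is not merely one of bookkeeping the anti-involution against stabilization; the isomorphism you ``expect'' simply is not there, and without it you have no handle on $R\otimes_fJ_\pi$ at all.

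The repair is what the paper does, and it is where the hypothesis $R\otimes_fI_\pi\cong R/(a+1)\oplus T$ earns its keep: it lets you choose a presentation matrix in block-diagonal form $A=\left(\begin{smallmatrix}a+1&0\\0&P\end{smallmatrix}\right)$ with a $1\times1$ block $(a+1)$. For a $1\times1$ block transposition is the identity, so only conjugation acts, and $\overline{A}^{tr}=\left(\begin{smallmatrix}1-a&0\\0&\overline{P}^{tr}\end{smallmatrix}\right)$ visibly splits off $R/(1-a)\cong\mathbb{Z}$ as a direct summand of a module representing $[R\otimes_fJ_\pi]$ --- with no claim needed about what $\overline{P}^{tr}$ presents. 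One then invokes the uniqueness of the decomposition of finitely generated $\mathbb{Z}$-torsion-free $\mathbb{Z}[Z/2Z]$-modules into copies of $R$, $\mathbb{Z}$ and $\mathbb{Z}^w$ (Curtis--Reiner, Theorem 74.3) to conclude that $R\otimes_fI_\pi\oplus R^k\cong\mathbb{Z}^w\oplus T\oplus R^k$ never acquires a $\mathbb{Z}$ summand. Your rational eigenspace count could also be made rigorous, but only by tracking the difference between the numbers of relations and generators (the twisting statement is true rationally only \emph{up to free summands} of that rank); your text does not do this.
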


\begin{proof}
Every finitely generated $\mathbb{Z}$-torsion-free 
$R$-module is a direct sum of copies of $R$,
$\mathbb{Z}=R/(a-1)$ and $\mathbb{Z}^w=R/(a+1)$,
and the number of summands of each type is uniquely determined.
(See Theorem 74.3 of \cite{CR}.)
In particular, all finitely generated projective $R$-modules are free,
and so the numbers of summands of types
$\mathbb{Z}$ and $\mathbb{Z}^w$ are invariant under stabilization.

Let $P$ be a presentation matrix for $T$.
Then $A=\left(\begin{smallmatrix}
a+1&0\\ 0& P
\end{smallmatrix}\right)$ is a presentation matrix for  
$R\otimes_fI_\pi$.
The stable isomorphism class $[R\otimes_fJ_\pi]$
contains the module presentated by
$\overline{A}^{tr}=\left(\begin{smallmatrix}
1-a&0\\ 0& \overline{P}^{tr}
\end{smallmatrix}\right)$.
This has $\mathbb{Z}$ as a direct summand,
whereas $R\otimes_fI_\pi$ does not.
Therefore $[I_\pi]\not=[J_\pi]$.
\end{proof}

\begin{lemma} 
Let $X$ be an indecomposable $PD_3$-complex such that $\pi=\pi_1(X)\cong
{F(r)}\rtimes{G}$.
If $\pi$ has an orientation reversing element $g$ of finite order then
$G$ has order $2m$, for some odd $m$.
\end{lemma}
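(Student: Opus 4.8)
We are given an indecomposable $PD_3$-complex $X$ with $\pi=\pi_1(X)\cong F(r)\rtimes{G}$, where $F(r)$ is free of rank $r$ and $G$ is finite, and we assume $\pi$ contains an orientation-reversing element $g$ of finite order. The goal is to show $|G|=2m$ with $m$ odd. My plan is to first pin down the finite subgroups of $\pi$: since $\pi=F(r)\rtimes G$ is a semidirect product with $F(r)$ free normal and $G$ finite, every finite subgroup of $\pi$ is conjugate into $G$ (a finite subgroup meets the free normal subgroup trivially and injects into the quotient). In particular $\langle g\rangle$ is conjugate to a cyclic subgroup of $G$, so without loss of generality I may take $g\in G$.

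\textbf{The role of Crisp's Theorem.} The central tool is Crisp's Theorem, which says that if an element of $\pi$ of prime order $p$ has infinite centralizer, then $p=2$, the element is orientation-reversing, and its centralizer has two ends. The plan is to apply this to the orientation-reversing element $g$. Since $\pi$ is infinite and virtually free, I expect $g$ (or a suitable power of it of prime order) to have infinite centralizer, because an orientation-reversing torsion element should, via the two-ended centralizer clause, force $p=2$. This pins the order of $g$ itself: any orientation-reversing torsion element must have order $2$, since if $g$ had order $2k$ then $g^k$ would be an orientation-reversing involution whose centralizer contains $\langle g\rangle$, and the reversing involution is forced to be the one of order $2$. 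So the orientation-reversing elements of finite order are precisely involutions.

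\textbf{Counting involutions and fixing the $2$-part of $G$.} The heart of the argument is to show that $G$ has a unique conjugacy class of involutions and that its Sylow $2$-subgroup is $Z/2Z$. The orientation character $w:\pi\to\{\pm1\}$ restricts to a nontrivial homomorphism $G\to\{\pm1\}$, whose kernel $G^+$ is the orientation-preserving subgroup of index $2$. I would argue that $G^+$ contains no involution: an orientation-preserving involution would have finite centralizer by Crisp's Theorem (the orientation-\emph{reversing} clause fails), but in a virtually free group any torsion element of a finite-index free-by-finite structure has centralizer meeting $F(r)$ in a subgroup that one must force to be trivial — the delicate point is ruling out an infinite centralizer while respecting that Crisp's Theorem only licenses reversing involutions to have infinite centralizer. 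Concretely, if $G$ had an orientation-preserving involution $h$, then either $C_\pi(h)$ is infinite, contradicting Crisp's Theorem directly, or $C_\pi(h)$ is finite; but I would show that in $F(r)\rtimes G$ the centralizer of any torsion element of $G$ is automatically infinite when $r\geq1$, because $F(r)$ contributes nontrivially to $H_1$ and one can produce commuting infinite-order elements. Pushing this through shows all involutions of $G$ lie outside $G^+$, hence $G^+$ has odd order; since $[G:G^+]=2$, writing $|G^+|=m$ (odd) gives $|G|=2m$, as required.

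\textbf{Main obstacle.} The step I expect to be hardest is the clean verification that every orientation-preserving involution in $G$ would have \emph{infinite} centralizer in $\pi$, so that Crisp's Theorem yields a contradiction — equivalently, ruling out the "finite centralizer" escape. This requires understanding how the free part $F(r)$ interacts with the $G$-action: one must locate an infinite-order element of $F(r)$ (or of $\pi$) commuting with the given involution, which depends on the fixed-point structure of the $G$-action on $F(r)$ and may need Lemma 2.4 (an element normalizing a subgroup produced from two vertex groups has infinite order) applied to a graph-of-groups structure for $\pi$. The remaining bookkeeping — translating between $g$ being conjugate into $G$, the orientation character restricting nontrivially, and the odd-order conclusion for $G^+$ — I expect to be routine once the centralizer dichotomy is settled.
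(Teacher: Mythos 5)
There is a genuine gap, and it sits exactly where you flagged the ``main obstacle'': your plan is to derive a contradiction from an orientation-preserving involution $h$ by showing $C_\pi(h)$ is infinite, but the claim that in $F(r)\rtimes{G}$ the centralizer of any torsion element is automatically infinite when $r\geq1$ is false. Already in the infinite dihedral group $D_\infty\cong{F(1)}\rtimes{Z/2Z}$ the centralizer of the involution is just $Z/2Z$. More to the point, in the very configuration the lemma must exclude --- $4$ dividing $|G|$, which the paper reduces to the case where $G$ is a $2$-group --- the graph-of-groups analysis (the normalizer condition, Crisp's Theorem, and the fact that the edge-group inclusions split $w$) forces vertex groups $(Z/2Z)^2$ and edge groups $Z/2Z$ generated by orientation-\emph{reversing} involutions; the orientation-preserving involutions in these vertex groups then have \emph{finite} centralizers, so Crisp's Theorem yields no contradiction whatsoever. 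This is precisely why the paper cannot argue as you propose: after pinning down the graph (valency $\leq2$, so either a tree or $\beta_1(\Gamma)=1$) it falls back on Turaev's criterion, computing $R\otimes_fI_\pi$ and $R\otimes_fJ_\pi$ over $R=\mathbb{Z}[Z/2Z]$ from explicit presentations and showing they are not stably isomorphic (via Lemma 7.2 in the tree case, and by exhibiting a $\mathbb{Z}$-summand of $R\otimes_fJ_\pi$ that $R\otimes_fI_\pi$ lacks in the other). None of this module-theoretic input appears in your proposal, and without it the statement does not follow.

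Two further problems. Your intermediate claim that every orientation-reversing torsion element must be an involution is both unnecessary and false (an element of order $2m$ with $m$ odd can be orientation-reversing, consistent with the lemma's conclusion), and the argument offered for it does not parse: if $g$ has order $2k$ then $g^k$ is orientation-reversing only when $k$ is odd, and knowing that $g^k$ is a reversing involution with $\langle{g}\rangle\leq{C_\pi(g^k)}$ puts no constraint on the order of $g$, since $\langle{g}\rangle$ is finite and Crisp's Theorem permits a reversing involution a two-ended centralizer. (The paper's first sentence extracts only that some power of $g$ is an orientation-reversing element of $2$-power order, which is all that is needed before reducing to $G$ a $2$-group.) Finally, a finite subgroup of $F(r)\rtimes{G}$ injects into the quotient $G$ but need not be conjugate into the chosen complement; for statements about $|G|$ the injection suffices, but your ``WLOG $g\in{G}$'' should be phrased accordingly.
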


\begin{proof}
If an orientation-reversing element $g$ has order $2^kd$ with $d$ odd 
then $k\geq1$ and $g^d$ is orientation-reversing and of order $2^k$.
Suppose that $|G|$ is a multiple of 4.
We may assume that $G$ is a $2$-group, $\pi$ is indecomposable 
and the graph of groups is reduced.
Then the edge groups must be generated by orientation reversing involutions
and the vertex groups must have order 4, 
by the normalizer condition and Crisp's Theorem.
Since the inclusion of an edge group splits $w$, 
the vertex groups must be $V=(Z/2Z)^2$.
(Thus $k=1$ and each vertex group has two conjugacy classes of
orientation reversing involutions.)

All vertices of the graph $\Gamma$ must have valency at most 2, 
for otherwise there would be an orientation reversing involution 
with centralizer containing $(Z/2Z)*(Z/2Z)*(Z/2Z)$.
Thus either $\Gamma$ is a tree or $\beta_1(\Gamma)=1$.

Let $w=w_1(X)$ and let $f:\mathbb{Z}[\pi]\to{R}
=\mathbb{Z}[Z/2Z]=\mathbb{Z}[a]/(a^2-1)$
be the epimorphism induced by $w$.
Then $f$ induces an epimorphism from
$I_\pi$ to ${I_{Z/2Z}}=R/(a+1)$,
which factors through an epimorphism $h:R\otimes_fI_\pi\to{R/(a+1)}$.
The inclusion of an edge group splits $h$,
and so $R\otimes_fI_\pi\cong{R/(a+1)}\oplus{N}$,
where $N=\mathrm{Ker}(h)$.

If $\Gamma$ is a tree then $\pi$ has a presentation
\[\langle a_1,\dots,a_n,b_1,\dots,b_n\mid~a_i^2=b_i^2=a_ib_ia_i^{-1}b_i^{-1}=1
~\forall~i\leq{n},\]
\[ a_i=a_{i+1}b_{i+1}~\forall~2\leq{i}\leq{n}\rangle,\]
where $w(a_i)=-1$ and $w(b_i)=1$ for all $i\leq{n}$.
(The amalgamations must be essentially as in the final set 
of relations since the edge groups are generated by 
orientation reversing involutions and 
each of the edge group centralizers has two ends.)
In this case consideration of the Fox-Lyndon presentation matrix 
for $R\otimes_fI_\pi$ shows that $\mathbb{Q}\otimes_\mathbb{Z}N=0$.
Thus $N$ is a $\mathbb{Z}$-torsion module, 
so $[I_\pi]\not=[J_\pi]$, by Lemma 7.2.
Therefore $\Gamma$ cannot be a tree.

If $\beta_1(\Gamma)=1$ then $\pi$ has a presentation
\[\langle a_1,b_1,\dots,a_n,b_n,t\mid~
a_i^2=b_i^2=a_ib_ia_i^{-1}b_i^{-1}=1~\forall~i\leq{n},\]
\[ a_i=a_{i+1}b_{i+1}~\forall~2\leq{i}\leq{n},~ta_n=a_1b_1t\rangle,\]
where $w(a_i)=-1$ and $w(b_i)=1$ for all $i\leq{n}$.
After replacing $t$ by $ta_n$, if necessary,
we may assume that $w(t)=1$.
In this case $N=\mathrm{Ker}(h)$ is not a $\mathbb{Z}$-torsion module.
Instead we find that
\[R\otimes_fI_\pi\cong{R/(a+1)}\oplus{(R/(2,a-1))^{n-1}\oplus{M}},
\]
where $M$ is an indecomposable $R$-module with underlying abelian group
$Z\oplus{Z/2Z}$ and $R$-action determined by
$a.(m,[n])=(m,[m+n])$ for all $(m,[n])\in{Z}\oplus{Z/2Z}$.
In particular, the augmentation module $\mathbb{Z}$ is not a summand
of $R\otimes_f{I_\pi}$.
On the other hand, $R\otimes_f{J_\pi}$ does have $\mathbb{Z}$ as a summand.
Therefore $R\otimes_f{I_\pi}$ and $R\otimes_f{J_\pi}$ 
are not stably isomorphic, and so $[I_\pi]\not=[J_\pi]$.

Thus $|G|$ cannot be divisible by 4, and so $|G|=2m$ for some odd $m$.
\end{proof}

In particular, if $w(G_v)\not=1$ then $G_v\cong{Z/mZ}\rtimes{Z/2Z}$ 
for some odd $m$.

\begin{theorem} 
Let $X$ be an indecomposable nonorientable $PD_3$-complex such that 
$\pi=\pi_1(X)$ has an orientation reversing involution.
Then $X\simeq{S^1}\times{RP^2}$.
\end{theorem}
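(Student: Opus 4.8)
The plan is to combine Theorem 7.1, the structural conclusion of Lemma 7.3, and the homological obstruction of Lemma 7.2 to force $\pi=\pi_1(X)$ down to $Z\oplus Z/2Z$, and then to recognise the homotopy type via Turaev's classification. First I would write $\pi\cong\pi\mathcal{G}$ for a reduced finite graph of finite groups $(\mathcal{G},\Gamma)$, with orientation character $w=w_1(X)$. Since every element of finite order is conjugate into a vertex group, the hypothesis gives a vertex group carrying an orientation-reversing involution. In particular not every vertex group can be orientation preserving, for otherwise Theorem 7.1 would give $X\simeq S^1\widetilde\times S^2$ and hence $\pi\cong Z$, which has no involution. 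So some vertex group $V=G_v$ has $w(V)\neq1$, and by the remark following Lemma 7.3 we have $V\cong Z/mZ\rtimes Z/2Z$ for some odd $m$; thus every involution of $\pi$ is orientation reversing. Crisp's Theorem then does the bookkeeping: an element of odd prime order is orientation preserving and so has finite centraliser, while an orientation-reversing involution with infinite centraliser has a two-ended one. As in the proof of Lemma 7.3 this shows that the edge groups are generated by orientation-reversing involutions, that every vertex has valence at most two, and hence that $\Gamma$ is either a tree or has $\beta_1(\Gamma)=1$.

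Next I would rerun the module computation of Lemma 7.3 in the present generality. Writing out the explicit presentation of $\pi$ attached to such a reduced graph — with generators the orientation-reversing involutions, the odd-order cyclic generators coming from the factors $Z/mZ$, and at most one stable letter $t$ — I would compute $R\otimes_f I_\pi$ for the epimorphism $f:\mathbb{Z}[\pi]\to R=\mathbb{Z}[Z/2Z]$ induced by $w$. The aim is to show that in every case other than $\pi\cong Z\oplus Z/2Z$ the module $R\otimes_f I_\pi$ has the form $R/(a+1)\oplus T$ with $T$ a $\mathbb{Z}$-torsion module, so that $[I_\pi]\neq[J_\pi]$ by Lemma 7.2 and Turaev's Criterion is violated. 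The point is that Poincar\'e duality needs a free summand $\mathbb{Z}=R/(a-1)$ in $R\otimes_f I_\pi$, and this survives only when the orientation-preserving kernel $\pi^+=\ker w$ contributes an infinite cyclic direction and carries no odd torsion: this simultaneously forces $m=1$, removes all additional vertices, and leaves a single loop with vertex group $Z/2Z$ and orientation-preserving stable letter, i.e. $\pi\cong Z\oplus Z/2Z$ with $w$ the projection onto $Z/2Z$. The infinite dihedral alternative $Z/2Z\ast Z/2Z$ is eliminated immediately, since a nontrivial free product would make $X$ decomposable. I expect this uniform computation — matching the $R/(a-1)$ summand across the tree and $\beta_1(\Gamma)=1$ cases and for arbitrary $m$ — to be the main obstacle.

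Finally, with $(\pi,w)$ pinned down, I would identify the homotopy type. Since $\pi^+=\ker w\cong Z$, the orientation double cover $\hat X$ is an \emph{orientable} $PD_3$-complex with $\pi_1\cong Z$, hence $\hat X\simeq S^1\times S^2$ by \cite{wall}; the deck transformation is a free orientation-reversing involution realising $Z\oplus Z/2Z$ with quotient $S^1\times RP^2$. Turaev's classification of $PD_3$-complexes by the triple $(\pi,w,\mu)$ with $\mu\in H_3(\pi;\mathbb{Z}^w)$ \cite{tur} then shows that the admissible homotopy type for this pair is unique, and it is realised by $S^1\times RP^2$. Therefore $X\simeq S^1\times RP^2$.
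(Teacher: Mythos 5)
Your overall architecture matches the paper's: reduce to a reduced finite graph of finite groups, use Theorem 7.1 and Lemma 7.3 to force the vertex groups into the form $Z/mZ\rtimes{Z/2Z}$ with edge groups $Z/2Z$, use the two-endedness of centralizers of involutions to conclude that $\Gamma$ is a tree or has $\beta_1(\Gamma)=1$, kill the tree case with Lemma 7.2, and finish with Wall's classification for $\pi\cong{Z}\oplus{Z/2Z}$. But there is a genuine gap at the step you yourself flag as the main obstacle. In the $\beta_1(\Gamma)=1$ case with a vertex group of order $2m>2$, the module $R\otimes_fI_\pi$ is \emph{not} of the form $R/(a+1)\oplus{T}$ with $T$ a $\mathbb{Z}$-torsion module: the stable letter $t$ satisfies $w(t)=1$ and contributes a summand $R/(a-1)\cong\mathbb{Z}$, so (as the paper computes, after first passing to a finite-index subgroup so that all vertex groups have order $2p$ with $p$ an odd prime) $R\otimes_fI_\pi\cong{R/(a+1)}\oplus{R/(a-1)}\oplus(R/(p,a-\varepsilon))^n$. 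Lemma 7.2 is therefore silent here, and your suggested mechanism --- that the presence of odd torsion in $\pi^+$ somehow destroys the $\mathbb{Z}$ summand and ``forces $m=1$'' --- is not a mechanism at all: the $\mathbb{Z}$ summand and the odd torsion coexist. What is actually needed is the finer obstruction used in Theorem 4.6 and in the second half of Lemma 7.3: compute $R\otimes_fJ_\pi$ as well, and observe that $a$ acts as multiplication by $\varepsilon$ on the $p$-torsion of $R\otimes_fI_\pi$ but by $-\varepsilon$ on a nontrivial $p$-torsion element of $R\otimes_fJ_\pi$; since $p$ is odd, $-\varepsilon\not\equiv\varepsilon\bmod p$ and the modules are not stably isomorphic. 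Without this dual computation your argument cannot exclude, for example, $\pi\cong(F(r)\rtimes{D_{2p}})$ with $\beta_1(\Gamma)=1$.

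Two smaller points. First, your inference ``some vertex group is $Z/mZ\rtimes{Z/2Z}$, thus every involution of $\pi$ is orientation reversing'' skips the argument that \emph{all} vertex groups are orientation reversing: the paper rules out an edge joining an orientable vertex group to a nonorientable one by noting the edge group would then be cyclic of odd order and properly contained in each of its normalizers, contradicting Crisp's Theorem. You need some version of this before you can claim the global structure of the graph. Second, the reduction to vertex groups of order $2p$ with $p$ prime (by passing to a finite-index subgroup) is what makes the module computation tractable; attempting it ``in the present generality'' for arbitrary odd $m$ is not wrong in principle but is not what you should expect to carry out uniformly. Your final step (double cover $\simeq{S^1\times{S^2}}$ plus Turaev/Wall classification for $Z\oplus{Z/2Z}$) is fine.
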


\begin{proof}
Since $\pi$ is indecomposable and has nontrivial torsion $\pi=\pi\mathcal{G}$,
where $(\mathcal{G},\Gamma)$ is a reduced finite graph of finite groups.
At least one vertex group has an orientation reversing element, 
by Theorem 7.1.
If there is an edge $e$ such that $G_{o(e)}$ is orientable and 
$G_{t(e)}$ is nonorientable then $G_e$ must be cyclic of odd order, 
since $G_{t(e)}\cong{Z/mZ}\rtimes{Z/2Z}$ with $m$ odd,
by Lemma 7.3.
But then it is properly contained in each of its normalizers, 
contradicting Crisp's Theorem.
Thus we may assume that all vertex groups are orientation reversing.
Hence they are all such semidirect products, 
and the edge groups are $Z/2Z$.
In particular, each vertex group has an unique conjugacy class
of involutions.

Suppose that there is a vertex group of order $2m>2$.
On passing to a subgroup of finite index, if necessary,
we may assume that $\pi\cong{F(r)}\rtimes{G}$, where $G$ has order $2p$, 
for some odd prime $p$.
Then the vertex groups must all be isomorphic to $G$,
and $G\cong{Z/2p}$ or $D_{2p}$.

Let $T$ be a maximal tree in $\Gamma$.
Then $T$ omits at most one edge of $\Gamma$,
since the centralizer of an involution is finite or has two ends.

Suppose first that $\Gamma$ is a tree.
Let $f:\mathbb{Z}[\pi]\to{R}=\mathbb{Z}[a]/(a^2-1)$
be the epimorphism induced by $w$.
Then $R\otimes_fI_\pi\cong{R/(a+1)}\oplus{M}$, where $M$ is a
$\mathbb{Z}$-torsion module.
Therefore $[I_\pi]\not=[J_\pi]$, by Lemma 7.2,
and so $\Gamma$ cannot be a tree.

If $\beta_1(\Gamma)=1$ then $\pi$ has a presentation
\[\langle a,b_1,\dots,b_n,t\mid~
b_i^p=a_ib_ia_i^{-1}b_i^{-\varepsilon}=a^2=1~\forall~i\leq{n},~ta=at\rangle,\]
where $\varepsilon=1$ if $G$ is cyclic and $\varepsilon=-1$ if $G$ is dihedral.
Moreover, $w(a)=-1$, $w(b_i)=1$ for all $i\leq{n}$ and $w(t)=1$.
Hence 
\[R\otimes_fI_\pi\cong{R/(a+1)}\oplus{R/(a-1)}\oplus(R/(p,a-\varepsilon))^n,\]
and so the $\mathbb{Z}$-torsion of $R\otimes_fI_\pi$ is $(Z/pZ)^n$, 
with $a$ acting as multiplication by $\varepsilon$.
On the other hand,
\[R\otimes_fJ_\pi\cong{R/(a-1)}\oplus{R/(a+1)}\oplus{N^n},\]
where $N\cong{R^2}/R(p,-a-\varepsilon)$ is generated by two elements
$n,n'$, with $pn=(a+\varepsilon)n'$.
Let $\nu=(a-\varepsilon)n$. Then $\nu\not=0$, 
but $p\nu=(a-\varepsilon)(a+\varepsilon)n'=0$
and $(a+\varepsilon)\nu=(a+\varepsilon)(a-\varepsilon)n=0$.
Thus $a$ acts as multiplication by $-\varepsilon$ on 
this nontrivial $p$-torsion element of $N$.
Since $-\varepsilon\not\equiv{\varepsilon}~{\mathit mod}~p$ 
it follows that $R\otimes_fI_\pi$ and $R\otimes_fJ_\pi$ 
are not stably isomorphic, and so $[I_\pi]\not=[J_\pi]$.

Since $\pi$ must be infinite,
the only remaining possibility is that the graph has one vertex 
$v$ and one edge $e$,
with $G_e=G_v=Z/2Z$.
Thus $\pi\cong{Z}\oplus{Z/2Z}=\pi_1(S^1\times{RP^2})$,
and so $X\simeq{S^1}\times{RP^2}$, by \cite{wall}.
\end{proof}

The following corollary strengthens part of Crisp's Theorem.

\begin{corollary}  
Let $X$ be a $PD_3$-complex and $g\in\pi=\pi_1(X)$ a nontrivial element of finite order.
If $C_\pi(g)$ is infinite then $g$ is an orientation-reversing involution 
and $C_\pi(g)=\langle{g}\rangle\times{Z}$.
\qed   
\end{corollary}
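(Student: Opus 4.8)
The plan is to reduce to the indecomposable case and then read off the conclusion from Theorem 7.4. First I would invoke the connected sum decomposition coming from Turaev's work to write $X\simeq X_1\#\dots\#X_r$ with each $X_i$ an indecomposable $PD_3$-complex, so that $\pi$ is the free product of the factor groups $\pi_i=\pi_1(X_i)$ (together with any free factors arising from $S^2$-bundle summands). Since $g$ has finite order it is conjugate into one of the non-free factors $\pi_i$, so after replacing $g$ by a conjugate I may assume $g\in\pi_i$. A standard normal-form argument in free products shows that the centralizer of a nontrivial finite-order element of a free factor lies entirely in that factor, whence $C_\pi(g)=C_{\pi_i}(g)$. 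As conjugation preserves the orientation character and the asserted conclusion is invariant under conjugation, it suffices to prove the corollary when $X$ is indecomposable.

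So assume $X$ is indecomposable and $g$ has finite order $>1$ with $C_\pi(g)$ infinite. Let $p$ be a prime dividing the order of $g$ and set $h=g^{|g|/p}$, an element of order $p$. Since $h$ commutes with $g$ we have $C_\pi(g)\leq C_\pi(h)$, so $C_\pi(h)$ is infinite. Crisp's Theorem then forces $p=2$ and shows $h$ is orientation-reversing. In particular the orientation character is nontrivial, so $X$ is nonorientable and $\pi$ contains the orientation-reversing involution $h$. Theorem 7.4 now applies and yields $X\simeq S^1\times RP^2$, so $\pi\cong Z\oplus Z/2Z$.

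To finish, I would observe that $Z\oplus Z/2Z$ has a unique element of finite order greater than $1$, namely the generator of the $Z/2Z$ summand, which is exactly $h$ and is orientation-reversing. Hence $g=h$ is an orientation-reversing involution and $C_\pi(g)=\pi=\langle g\rangle\times Z$. Transporting this back through the conjugation and the free-factor identification of the first paragraph establishes the general statement: $g$ is an orientation-reversing involution and $C_\pi(g)\cong\langle g\rangle\times Z$.

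I expect the only genuine subtlety to lie in the reduction of the first paragraph, namely verifying that the centralizer of a torsion element of a free product coincides with its centralizer in the factor containing it, and that both infiniteness and the orientation character are preserved under this identification. Once the problem is localized to an indecomposable factor the heavy lifting is already done by Theorem 7.4, and the remaining steps follow immediately from the structure of $Z\oplus Z/2Z$.
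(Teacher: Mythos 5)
Your proof is correct and follows the route the paper intends: the corollary is stated without proof precisely because it is the combination of the connected-sum reduction to an indecomposable factor, Crisp's Theorem applied to a prime-order power of $g$, and Theorem 7.4 identifying that factor as $S^1\times{RP^2}$. The one point worth making explicit (which you flag yourself) is the malnormality of free factors, i.e.\ $C_{A*B}(g)=C_A(g)$ for $1\neq g\in{A}$, which is standard and completes the reduction.
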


\section{homotopy types}

Let $W$ be a $PD_3$-complex with fundamental group $\pi$,
orientation character $w$ and fundamental class $[W]\in{H_3(W;\mathbb{Z}^w)}$.
If $c_W:W\to{K(\pi,1)}$ is a classifying map let $\mu(W)=c_{W*}[W]\in{H_3(\pi;\mathbb{Z}^w)}$.
Two such $PD_3$-complexes $W_1$ and $W_2$ are homotopy equivalent if and only if
$\mu(W_1)$ and $\mu(W_2)$ agree up to sign and the action of $Out(\pi)$ 
\cite{he77}.
If $\pi$ is virtually free then $H_3(W;\mathbb{Z}^w)$ is finite.
Since every indecomposable $PD_3$-complex is either aspherical or has
virtually free fundamental group it follows that there are only finitely many
homotopy types with any given group.
Note also that if $\pi$ is indecomposable and virtually free 
then $Out(\pi)$ is finite \cite{car}, 
and so the group of self-homotopy equivalences of $W$ is finite \cite{He}.

Suppose that $\pi=G_0*_{Z/2Z}\rho$, where $G_0$ has cohomological period dividing 4
and a central involution and $\rho$ is an iterated free product 
of dihedral groups $G_i=D_{2m_i}$ with amalgamation over copies of $Z/2Z$,
where $m_i=2s_i+1$, for $i\leq{n}$.
Then $\rho'\cong*_{i=1}^nZ/m_iZ$.
Let $m_0=|G_0|$.
(We allow the possibility $G_0=Z/2Z$.)
By the work of \S7 above, we may assume that $W$ is orientable.
Since $\rho\cong\rho'\rtimes{Z/2Z}$ we have 
\[H_3(\rho;\mathbb{Z})\cong{H_3(Z/2Z;\mathbb{Z})}\oplus{H_3(\rho';\mathbb{Z})}.\]
A Mayer-Vietoris argument then gives
\[H_3(\pi;\mathbb{Z})\cong{H_3(G_0;\mathbb{Z})}\oplus{H_3(\rho';\mathbb{Z})}
=\oplus_{i=0}^n(Z/m_iZ).\]
Let $f:\pi\to{G_0}$ be the epimorphism with kernel normally generated by $\rho'$, 
and let $W_\sigma$ be the covering space corresponding to $\sigma=f^{-1}(S)$,
where $S<G_0$ is a Sylow $p$-subgroup of $G_0$.
If $p$ is odd $W_\sigma$ is a connected sum of lens spaces, by Theorem 1 of \cite{tur}.
Since $\mu(W_\sigma)$ is the image of $\mu(W)$ under transfer,
it follows that $\mu(W)$ must project to a generator of each 
of the odd cyclic summands of $H_3(\pi;\mathbb{Z})$.
If $p=2$ we may argue instead that the square 
$Sq^1:H^1(W_\sigma;\mathbb{F}_2)\to{H^2(W_\sigma;\mathbb{F}_2)}$ is nonzero.
Hence the generator of $H^3(W_\sigma;\mathbb{F}_2)$ is a product
of elements in the image of $H^1(\sigma;\mathbb{F}_2)$,
by Poincar\'e duality.
It follows that the image of $\mu(W)$ in the 2-primary summand must generate also.

For each $1\leq{i}\leq{n}$ and $u\in{Z/m_iZ^\times}$ there is an automorphism
which sends $b_i$ to $b_i^u$, for $b_i\in{G_i'}$, and which fixes the other vertex groups.
If $G_i\cong{G_j}$ there is an automorphism interchanging $G_i$ and $G_j$.
As every automorphism of $G_0$ fixes the central involution 
it extends to an automorphism of $\pi$ which fixes $\rho$. 
These automorphisms act naturally on $H_3(\pi;\mathbb{Z})$.

In particular, if $G_0=Z/2Z$, so $\pi\cong\pi'\rtimes{Z/2Z}$,
the double cover $W'$ is a connected sum of lens spaces.
Taking into account the actions of these automorphisms
and the homotopy classification of lens spaces, 
we see that $W_1\simeq{W_2}$ if and only if $W_1'\simeq{W_2'}$.

Turaev constructed an isomorphism $\nu$ from $H_3(\pi;\mathbb{Z}^w)$ to a group 
$[F^2(C), I_\pi]$ of projective homotopy classes of module homomorphisms 
and showed that $\mu\in{H_3(\pi;\mathbb{Z}^w)}$ is the image of the fundamental class 
of a $PD_3$-complex if and only if $\nu(\mu)$ is
the class of a homotopy equivalence \cite{tur}.
Since there is at least one homotopy equivalence the ring $End_\pi([I_\pi])$ 
is isomorphic as an abelian group to $\oplus_{i=0}^n(Z/m_iZ)$.
Do the $(n+1)$-tuples of the form $(u_0,\dots,u_n)$ with $(u_i,m_i)=1$ 
for $0\leq{i}\leq{n}$ correspond to the units $Aut_\pi([I_\pi])$?
(This is so when the $m_i$ are all relatively prime,
for then $End_\pi([I_\pi])\cong{Z/\Pi{m_i}Z}$, and
so must act in the obvious way on $H_3(\pi;\mathbb{Z})$.)

We may also ask whether such $PD_3$-complexes can arise in some natural manifold context.
For instance, is $W\times{S^1}$ homotopy equivalent to a closed 4-manifold?
(Since the group of self-homotopy equivalences of such a complex is finite
it is equivalent to ask whether there is a closed 4-manifold $M$ 
with $\chi(M)=0$ and $\pi_1(M)\cong\pi\rtimes{Z}$, 
by Theorem 4.7 of \cite{Hi}.)
The case when $\pi=D_{2m}$ may be ruled out by
a surgery semicharacteristic argument \cite{hm86}.

\section{is every $PD_3$-complex virtually a 3-manifold?}

It is well known that every $PD_2$-complex is homotopy equivalent 
to a closed surface.
The argument of Eckmann and M\"uller \cite{em80} for the cases 
with $\beta_1\not=0$ involves delicate combinatorial group theory.
(The hypothesis $\beta_1\not=0$ is removed in \cite{el81}.)
More recently, Bowditch used geometric group theory to obtain the
stronger result that an $FP_2$ group $\Gamma$ with $H^2(\Gamma;\mathbb{Z}[\Gamma])\cong{Z}$
acts properly discontinuously on $\mathbb{E}^2$ or $\mathbb{H}^2$ \cite{bow}.

Higher dimensional considerations suggest another, 
more topological strategy, which can be justified {\it a posteriori}.
The bordism Hurewicz homomorphism from $\Omega_n(X)$ to 
$H_n(X;\mathbb{Z})$ is an epimorphism in degrees $n\leq4$.
Therefore if $X$ is an orientable $PD_n$-complex with $n\leq4$ 
there is a degree-1 map $f:M\to{X}$ with domain a closed orientable $n$-manifold.
(See \cite{hv} for the corresponding result for nonorientable $PD_n$-complexes, 
using $w_1$-twisted bordism and homology.)
Choose compatible basepoints $m_o$ and $x_o=f(m_o)$,
and let $\pi=\pi_1(X,x_o)$ and $f_*=\pi_1(f)$. 
If $X$ is a finite $PD_2$-complex then such a map 
$f$ is a homotopy equivalence $\Leftrightarrow$ $\mathrm{Ker}(f_*)=1$
$\Leftrightarrow$ $\chi(M)=\chi(X)$.
If $\mathrm{Ker}(f_*)$ contains the class of a non-separating simple 
closed curve $\gamma$ we may reduce $\chi(M)$ by surgery on $\gamma$.
Combining the results of \cite{el81, em80, ga85}
we see that there is always such a curve $\gamma$.
Can this be shown directly, without appeal to \cite{el81, em80}?

We would like to study $PD_3$-complexes in a similar manner.
Let $X$ be a $PD_3$-complex and $f:M\to{X}$ a degree-1 map,
where $M$ is a closed 3-manifold.
Then $f$ is a homotopy equivalence $\Leftrightarrow$ $\mathrm{Ker}(f_*)=1$.
Since $\pi_1(M)$ and $\pi_1(X)$ are finitely presentable,
this kernel is normally generated by finitely many elements of $\pi_1(M)$, 
which may be represented by the components of a link $L\subset{M}$.
We would like to modify $M$ using such a link to render the kernel trivial.
This {\it is\/} possible if $X$ is homotopy equivalent 
to a closed orientable $3$-manifold $N$,
for $M$ may then be obtained from $N$ by Dehn surgery on a link 
whose components are null homotopic in $N$ \cite{gd07}.
Gadgil's argument appears to use the topology of the target space 
in an essential way.

The $PD_3$-complexes constructed in \S6 are not homotopy equivalent
to 3-manifolds,
so this strategy cannot be carried through in all cases.
However, it remains possible that every $PD_3$-complex 
is {\it virtually\/} a 3-manifold, i.e., has a finite covering space which is
homotopy equivalent to a closed orientable 3-manifold.
If this is true it must be posssible to kill $\mathrm{Ker}(f_*)$
by surgery and passing to finite covering spaces.

Easy reductions show that we may assume that $X$ is aspherical,
and then that the irreducible components of $M$ are aspherical.
There is then no need to pass to finite covers,
for if an aspherical $PD_3$-complex $X$ is virtually a $3$-manifold
then $X$ is homotopy equivalent to a $3$-manifold,
by the Geometrization Theorem of Thurston and Perelman,
and the work of Zimmermann \cite{zi82}.

Let $L=\amalg_{i\leq{m}}L_i$ be a link in a 3-manifold $M$ 
with an open regular neighbourhood $n(L)=\amalg_{i\leq{m}}n(L_i)$.
We shall say that $L$ admits a {\it drastic\/} surgery 
if there is a family of slopes
$\gamma_i\subset\partial{n(L_i)}$ such that the normal closure of
$\{[\gamma_1],\dots,[\gamma_n]\}$ in $\pi_1(M-n(L))$
meets the image of each peripheral subgroup $\pi_1(\partial{n(L_i)})$
in a subgroup of finite index.
If $f:M\to{N}$ is a degree-1 map of closed 3-manifolds $\mathrm{Ker}(f_*)$
is represented by a link which admits a drastic surgery \cite{gd07}.
(Gadgil's result is somewhat stronger.)

\begin{lemma} 
If $X$ is an aspherical $PD_3$-complex and $L$ admits a drastic surgery 
then $X$ is homotopy equivalent to a $3$-manifold.
\end{lemma}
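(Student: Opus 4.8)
The plan is to use the drastic surgery to replace the given degree-one map by one whose kernel is tame, and then to strip off that kernel by means of the prime decomposition of $3$-manifolds. Recall that we already have a degree-one map $f:M\to X$ from a closed orientable $3$-manifold with $\mathrm{Ker}(f_*)$ carried by $L$. First I would perform the drastic Dehn surgery, filling each $\partial n(L_i)$ along the slope $\gamma_i$ to obtain a closed orientable $3$-manifold $M'$. Since the meridian of $L_i$ is already trivial in $\pi_1(M)$ and the core $[L_i]$ lies in $\mathrm{Ker}(f_*)$, the homomorphism $f_*$ kills the entire peripheral subgroup $\pi_1(\partial n(L_i))$; in particular $f_*(\gamma_i)=1$, so $f$ extends over each filling solid torus (there being no further obstruction, as $\pi_2(X)=0$) to a map $g:M'\to X$. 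Forming the surgery cobordism $W$ from $M$ to $M'$ by attaching $2$-handles along $L$ with the framings $\gamma_i$ and extending $f$ over $W$ (the attaching circles are null-homotopic in $X$, and the remaining obstruction lies in $\pi_2(X)=0$), one finds $g_*[M']=f_*[M]$ in $H_3(X;\mathbb{Z}^w)$, since $H_4(X)=0$; hence $g$ again has degree one.

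Next I would identify $\mathrm{Ker}(g_*)$. As $\pi_1(M')$ is $\pi_1(M-n(L))$ modulo the normal closure of the $\gamma_i$, while $\pi_1(X)$ is $\pi_1(M-n(L))$ modulo the normal closure of all the peripheral subgroups, the map $g_*$ is the evident surjection, and $\mathrm{Ker}(g_*)$ is the normal closure of the images of the peripheral subgroups in $\pi_1(M')$. The hypothesis that $L$ admits a drastic surgery says exactly that each of these images is finite; so $\mathrm{Ker}(g_*)$ is normally generated by the finitely many finite-order classes $c_i$ carried by the cores of the filling solid tori.

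Finally I would remove this torsion. Since $X$ is aspherical, $\pi:=\pi_1(X)$ is a $PD_3$-group, hence torsion-free and one-ended. Let $M'\simeq P_1\#\dots\#P_r$ be the prime decomposition, with $\pi_1(M')$ the free product of the $\pi_1(P_j)$. By the Geometrization Theorem each $P_j$ is aspherical, a spherical space form, or $S^1\times S^2$, and all the torsion of $\pi_1(M')$ is conjugate into the spherical factors. Because $g$ has degree one, $g_*[M']$ generates $H_3(\pi;\mathbb{Z}^w)\cong\mathbb{Z}$; since the spherical factors contribute only finite groups and the $S^1\times S^2$ factors contribute $0$ to $H_3$, some aspherical factor $P_0$ must map to $X$ with nonzero degree. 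I would then show that $\pi_1(P_0)\to\pi$ is an isomorphism, so that $P_0\to X$ is a degree-one map inducing an isomorphism on fundamental groups, and is therefore a homotopy equivalence by the criterion recalled above. This exhibits $X$ as homotopy equivalent to the closed orientable $3$-manifold $P_0$.

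The hard part is this last step. Because $\mathrm{Ker}(g_*)$ is only \emph{normally} generated by torsion, and the normal closure of finite-order elements in a free product may well contain elements of infinite order, one cannot argue on the level of fundamental groups alone that the non-aspherical factors die, nor that a single factor carries all of $\pi$. To make the splitting precise I expect to need the full degree-one hypothesis together with Poincar\'e duality for $g$---in effect a sphere-theorem argument presenting $M'$ as a connected sum of one aspherical factor $P_0\simeq X$ with spherical space forms and copies of $S^1\times S^2$, all of which fall into $\mathrm{Ker}(g_*)$---after which the rigidity of $PD_3$-groups (a surjection of nonzero degree is an isomorphism) finishes the identification $\pi_1(P_0)\cong\pi$.
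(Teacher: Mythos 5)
Your reduction is sound as far as it goes, and it follows the same strategy as the paper: perform the drastic surgery to obtain a degree-one map $g\colon M'\to X$ whose kernel is normally generated by finitely many elements of finite order, then split $M'$ into prime summands and try to see that a single aspherical summand carries everything. (The paper phrases the second step by collapsing $M'$ to the wedge of its summands, noting that the restriction of the map to each non-aspherical summand factors through a point or through $S^1$ and hence has degree zero, so that the aspherical part already maps to $X$ with degree one.) Your verifications that $g$ extends over the filling tori, has degree one, and has kernel normally generated by the cores of the filling tori are all correct.

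The genuine gap is the last step, which you leave open and, more seriously, mis-diagnose: you assert that one cannot conclude on the level of fundamental groups and that Poincar\'e duality for $g$ or a sphere-theorem argument is needed. In fact the fundamental-group argument closes at once from facts you have already recorded. Write $\pi_1(M')=G_1*\dots*G_k$ with $G_j=\pi_1(P_j)$ and $K=\mathrm{Ker}(g_*)$. Each torsion normal generator of $K$ is conjugate into a free factor containing torsion, hence into a finite $G_j$, so $K\le\langle\langle G_j:|G_j|<\infty\rangle\rangle$. Conversely $\pi$ is torsion-free, so each finite $G_j$ has trivial image and $\langle\langle G_j:|G_j|<\infty\rangle\rangle\le K$. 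Hence $K$ is exactly the normal closure of the finite factors, and $\pi$ is isomorphic to the free product of the infinite ones. Since a $PD_3$-group is one-ended ($H^1(\pi;\mathbb{Z}[\pi])\cong H_2(\pi;\mathbb{Z}[\pi])=0$), it is freely indecomposable and not $Z$; therefore there is exactly one infinite factor $G_{j_0}$, it is aspherical, and $\pi_1(P_{j_0})\to\pi$ is an isomorphism, whence $P_{j_0}\to X$ is a homotopy equivalence of aspherical complexes. Your worry that the normal closure of torsion elements contains elements of infinite order is true but harmless: all such elements lie in the normal closure of the finite factors, which meets the free product of the remaining factors trivially. No duality or surgery-theoretic input is needed at this stage.
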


\begin{proof}
After a drastic surgery on $L$ we may assume that $\mathrm{Ker}(f_*)$
is normally generated by finitely many elements of finite order.
Let $M=\#_{i=1}^{i=k}M_i$ be a factorization of $M$ 
as a connected sum of irreducible 3-manifolds, 
with $M_i$ aspherical if $i\leq{r}$ and $\pi_1(M_i)$ finite, 
$Z$ or $Z\oplus{Z/2Z}$ if $i>r$.
Since $X$ is aspherical $f$ extends to a map 
$F:\vee_{i=1}^{i=k}M_i\to{X}$.
If $\pi_1(M_i)$ is finite then $F|_{M_i}$ is null-homotopic,
while if $\pi_1(M_i)\cong{Z}$ or $Z\oplus{Z/2Z}$ then $F|_{M_i}$ 
factors through $S^1$.
In either case the restriction to such terms has degree 0.
Hence $F$ induces a degree-1 map from $g:N=\#_{i=1}^{i=r}M_i\to{X}$.
This map is clearly $\pi_1$-injective, and so it is a homotopy equivalence.
\end{proof}

There are knots which admit no drastic surgery.
The following example was suggested by Cameron Gordon.
Let $M$ be an orientable 3-manifold which is Seifert fibred over $S^2(p,q,r)$, 
where $\frac1p+\frac1q+\frac1r\leq1$, and let $K\subset{M}$ be a regular fibre.
Let $\phi,\mu\subset\partial{n(K)}$ be a regular fibre and a meridian, respectively.
Then surgery on the slope $s\mu+t\phi$ gives a 3-manifold which is Seifert fibred over $S^2(p,q,r,s)$,
if $s\not=0$, or is a connected sum of lens spaces, if $s=0$.
If $s\not=0$ the image of $\phi$ has infinite order in $\pi_1(N)$;
otherwise the image of $\mu$ has infinite order there.
Thus no surgery on a regular fibre of $M$ is drastic.
(We may modify this example to obtain one with $M$ not Seifert fibred, 
by replacing a tubular neighbourhood of another regular fibre 
by the exterior of a hyperbolic knot.)

However we have considerable latitude in our choice of link $L$ 
representing $\mathrm{Ker}(f_*)$.
In particular, we may modify $L$ by a link homotopy,
and so the key question may be:

\centerline{is every knot $K\subset{M}$ {\it homotopic\/} 
to one admitting a drastic surgery?}

\smallskip
\noindent The existence of $PD_3$-complexes which are not homotopy equivalent 
to 3-manifolds shows that we cannot expect a stronger result, in which
``contains $\dots\pi_1(\partial\overline{n(L_i)})$"
replaces ``meets the image $\dots$ finite index"
in the definition of drastic surgery.

In general, we might expect to encounter obstructions in $L_3(\pi,w)$
to obtaining a $\mathbb{Z}[\pi]$-homology equivalence by integral surgery.
For instance, there are finite groups of cohomological period 4 with 
finite Swan complexes but which do not act freely on homology 3-spheres 
\cite{hm86}.
The validity of the Novikov conjecture for aspherical 3-manifolds 
suggests that such obstructions may never arise in the cases of most interest 
to us.
(See \cite{jk03, kt02}.)
In any case, we allow more general Dehn surgeries.

The argument for the existence of a degree-1 map $f:M\to{X}$ 
does not require us to assume {\it a priori\/} that $X$ be finite, 
nor even that $\pi_1(X)$ be finitely presentable.
The latter condition is needed to ensure that $\mathrm{Ker}(f_*)$ is
represented by a link in $M$.
In all dimensions $n\geq4$ there are $PD_n$-groups of type $FF$
which are not finitely presentable \cite{da00}.
This leaves the question: are $PD_3$-groups finitely presentable?
Our strategy does not address this issue.

\newpage

\end{document}